\newtheorem {theorem} {Theorem}
\newtheorem {proposition}{Proposition}
\newtheorem {corollary}{Corollary}
\newtheorem {lemma}{Lemma}
\numberwithin{equation}{section}
\title[First integrals of analytic differential systems]
{Regularity and convergence of local first integrals of analytic differential systems}
\author[X. Zhang]
{Xiang Zhang}
\address{School of Mathematical Sciences, and MOE--LSC, Shanghai Jiao Tong University, Shanghai 200240, P. R. China}
\email{xzhang@sjtu.edu.cn}
\subjclass[2010]{37J35, 37C10, 37C27, 37C27, 34C14.}
\keywords{Analytic differential systems; resonant singularities; $C^\infty$ local first integrals; generic divergence of formal first integrals.}
\begin{document}

\begin{abstract}
Poincar\'e proved nonexistence of formal first integrals near a nonresonant singularity of analytic autonomous differential systems. In the resonant case with one zero eigenvalue and others nonresonant, there remains an open problem on regularity and convergence of local first integrals.

Here we provide an answer to this problem. The system has always a local $C^\infty$ first integral near the singularity when it is nonisolated. In any finite dimensional space formed by analytic differential systems having the same linear part at the singularity, either all the systems have local analytic first integrals or only the systems in a pluripolar subset have local analytic first integrals.
\end{abstract}
\maketitle

\section{Introduction and statement of the main results}

For an analytic autonomous differential system defined in a connected open region $\Omega$ of $\mathbb R^n$, it is a well known fact that the system always have $n-1$ functionally independent analytic first integrals defined in a neighborhood of any regular point in $\Omega$. So the system is completely analytically integrable near any regular point. But near a singularity the analytic differential system may have no analytic even no smooth first integrals. The situation will be more intricate, see e.g. \cite{BLV20,Bi79,LPW2012,LV20, RS09,Zhb}. The existence and regularity of first integrals of an analytic differential system are related to
dynamics on algebraic, geometry and topology, see e.g. \cite{BT00,Zh11}.

An analytic autonomous differential system near a singularity can be written in
\begin{equation}\label{e1}
 \dot x=Ax+f(x), \qquad x\in\left(\mathbb R^n,\ 0\right),
\end{equation}
where the dot denotes the derivative with respect to the time $t$, and $f(x)=o(|x|)$ is an $n$--dimensional vector valued analytic function defined in a neighborhood of the origin. Let $\lambda=(\lambda_1,\lambda_2,\ldots,\lambda_n)$ be the $n$--tuple of eigenvalues of $A$. Hereafter, $\mathbb R$ is the set of real numbers, and $\mathbb Z_+$ and $\mathbb Q_+$ are respectively the sets of nonnegative integers and nonnegative rational numbers.

The existence and regularity of first integrals of system \eqref{e1} depend on both $f(x)$ and the eigenvalues of $A$. In case the eigenvalues of $A$ are not resonant, Poincar\'e \cite{Po1897} proved the next result.

\noindent{\bf Theorem A.} \textit{If the $n$--tuple of eigenvalues $\lambda$ do not resonant, i.e.
\[
\langle m,\lambda\rangle\ne 0, \quad m=(m_1,\ldots,m_n)\in\mathbb Z_+^n,\ \ |m|\ge 2,
\]
where $|m|=m_1+\ldots+m_n$ and $\langle\cdot,\cdot\rangle$ denotes the inner product of two vectors, then the analytic differential system \eqref{e1} has neither analytic nor formal first integral in a neighborhood of the origin.}

For a proof of Theorem A, see e.g. \cite{Fu96,SL01}, where Furta \cite{Fu96} and Shi and Li \cite{SL01} also extended the Poincar\'e result to quasihomogeneous and semi--quasihomogeneous differential systems on their local nonintegrability in the analytic or formal sense.
Shi \cite{Sh2007} provided a sufficient condition on the nonexistence of meromorphic or formal meromorphic first integrals of the analytic differential system \eqref{e1} near a rational nonresonant singularity.

On existence of local analytic or meromorphic  first integrals near a singularity, there appeared some necessary conditions on the eigenvalues of linearization of system \eqref{e1} at the singularity. Chen et al \cite{CYZ2008} in 2008 obtained the optimal upper bound on the number of analytic or formal first integrals of the analytic system \eqref{e1}. Cong et al \cite{CLZ2011} in 2011 got the optimal upper bound on the number of meromorphic or formal meromorphic first integrals for system \eqref{e1}, for quasihomogeneous and semi--quasihomogeneous analytic differential systems near a singularity, and for analytic autonomous differential system near a periodic orbit.

On determination of analytic integrability of analytic differential systems, the pioneer work belongs to Poincar\'e, who proved that a planar analytic differential system having a singularity with a pair of imaginary eigenvalues is locally analytically integrable if and only if it is analytically orbitally equivalent to its linear part. This result was extended to general higher dimensional analytic integrable differential systems by Zung \cite{Zu2002} in 2002, and to analytic integrable Hamiltonian systems in the Liouvillian sense by Zung \cite{Zu2005} in 2005 via the method of the torus action, where he proved that any local analytically Liouvillian integrable Hamiltonian system near a singularity is analytically equivalent to its Birkhoff normal form. Zhang \cite{Zh2011p} in 2011 proved the existence of analytic normalization for both general analytic integrable differential system near a singularity, and diffeomorphism near a fixed point, and provided the concrete forms of their analytic integrable Poincar\'e--Dulac normal forms. This result was extended to partially analytic integrable differential systems by Du et al \cite{DRZ2016} in 2016 with an additional condition.

We note that all the above results provide either necessary conditions on the existence and number of analytic (meromorphic, formal or formal meromorphic) first integrals, or on equivalent characterization on existence of an analytic normalization for analytic integrable systems. Whereas on the existence of first integrals with suitable regularity,  there are some general results for two dimensional differential systems,  see e.g. the book \cite{RS09} and the references therein. But for higher dimensional analytic differential systems \eqref{e1} with the singularity having resonant eigenvalues, the results are very few, see e.g. \cite{LLZ2003}.

In this paper the blanket assumptions are the following:
\begin{equation}\label{e2}
\lambda_1=0,\quad \langle \widetilde m, \widetilde \lambda\rangle\ne 0, \quad \widetilde m\in\mathbb Z_+^{n-1},\ \ |\widetilde m|\ge 2,
\end{equation}
where $\widetilde m=(m_2,\ldots,m_n)$ and $\widetilde \lambda=(\lambda_2,\ldots,\lambda_n)$. Li {\it et al} \cite{LLZ2003} provided the conditions on the existence of a formal or an analytic first integral of system \eqref{e1} as stated in the following.

\noindent{\bf Theorem B.} \textit{Assume that system \eqref{e1} is analytic and satisfies \eqref{e2}. Then system \eqref{e1} has a formal first integral in $(\mathbb R^n,\ 0)$ if and only if the singularity  $x=0$ is not isolated. Moreover, if $n=2$ the first integral could be analytic.}

Zhang \cite{Zh2017} pursued the study on analyticity of the local first integral, and got the next results.

\noindent{\bf Theorem C.} \textit{For system \eqref{e1} with $n-1$ nonresonant eigenvalues $\widetilde\lambda$, the following statements hold.
\begin{itemize}
\item[$(a)$] If all the real parts of $\widetilde \lambda$ have the same sign, then system \eqref{e1} has an analytic first integral in $(\mathbb R^n,\ 0)$ if and only if the singularity $ x=0$ is not isolated.
\item[$(b)$] There exist analytic differential systems \eqref{e1} with $\widetilde \lambda$ having both positive and negative real parts, which have no analytic first integrals in $(\mathbb R^n,\ 0)$.
\end{itemize}
}

According to the results in statement $(b)$ of Theorem C, it is natural to appear the next open problems.

\noindent{\bf Problem 1:} \textit{Are there $C^\infty$ first integrals in $(\mathbb R^n,\ 0)$ of system \eqref{e1} under the conditions \eqref{e2}?}

 \noindent{\bf Problem 2:} {\it What is the measure of the set of analytic system \eqref{e1} satisfying the conditions \eqref{e2}, which have analytic first integrals in $(\mathbb R^n,\ 0)$?}

We first answer Problem 1 on existence of $C^\infty$ first integrals.

\begin{theorem}\label{t1}
Under the conditions \eqref{e2}, the analytic differential system \eqref{e1} has a $C^\infty$ first integral in $(\mathbb R^n,0)$ if and only if the singularity at the origin is not isolated.
\end{theorem}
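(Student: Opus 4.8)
The plan is to prove the two directions separately; the sufficiency part (existence of a $C^\infty$ first integral when the origin is nonisolated) is the substantial one.

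\emph{Necessity.} Suppose \eqref{e1} has a $C^\infty$ first integral $H$ near $0$, i.e. $H$ is constant on orbits and nonconstant on every neighbourhood of $0$. Its Taylor series $\widehat H$ at $0$ is a formal first integral of \eqref{e1}; if $\widehat H$ is nonconstant, Theorem B gives at once that the origin is nonisolated. It remains to exclude the case in which $H-H(0)$ is flat at $0$ while the origin is an isolated singularity. For this I would pass to a one–dimensional centre manifold $W^c$: the reduced equation is $\dot x_1=\phi(x_1)$, and $\phi$ cannot be formally zero (otherwise $x_1$ would be a formal first integral of the normal form, contradicting Theorem B), so $\phi$ vanishes to finite order at $0$, the reduced flow has only finitely many orbits near $0$, and $H$, being continuous and constant on each of them, is constant on $W^c$. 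A further dynamical argument using the stable, unstable and centre--stable/unstable manifolds of the origin then propagates this and shows $H$ is locally constant, a contradiction.

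\emph{Sufficiency: reduction.} Assume the origin is nonisolated. By \eqref{e2}, $\lambda_1=0$ is a simple eigenvalue of $A$ and $\lambda_j\ne0$ for $j\ge2$, so the restriction of $A$ to the sum of the remaining generalized eigenspaces is invertible; solving that block in $Ax+f(x)=0$ by the implicit function theorem shows that the singular set of \eqref{e1} near $0$ is either $\{0\}$ or an analytic curve $\Gamma$ tangent to $\ker A$, and nonisolatedness gives the latter. After an analytic change of coordinates we may take $\ker A=\R e_1$ and $\Gamma$ equal to the $x_1$–axis $L$, so that $Ae_1=0$ and $f(x_1,0)=0$: thus $L$ is a line of equilibria and a normally hyperbolic invariant manifold of \eqref{e1}, the linearization at each point of $L$ having the simple eigenvalue $0$ along $L$ and $n-1$ eigenvalues of nonzero real part. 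Writing the system with the splitting $x=(x_1,\widetilde x)$, the resonances responsible for the $\widetilde x$–equation are $\langle\widetilde m,\widetilde\lambda\rangle=\lambda_j$ with $\widetilde m\ne e_j$, and those for the $x_1$–equation are $\langle\widetilde m,\widetilde\lambda\rangle=0$ with $|\widetilde m|\ge1$; by \eqref{e2} and $\lambda_j\ne0$ all are excluded, so the Poincar\'e--Dulac normal form of \eqref{e1} along $L$ has vanishing first component.

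\emph{Sufficiency: construction.} It remains to realize this normalization smoothly. Performing the (a priori only formal) normal form along $L$ and summing the normalizing transformation by Borel's theorem with the parameter $x_1$, one obtains $C^\infty$ coordinates $(u,\widetilde v)$ near $0$, fixing $L=\{\widetilde v=0\}$, in which $\dot u=R(u,\widetilde v)$ with $R$ flat along $L$, while the $\widetilde v$–equation is linear in $\widetilde v$ up to terms flat along $L$; in particular $L$ retains its $C^\infty$ centre–stable and centre–unstable manifolds $W^{cs},W^{cu}$ together with the corresponding $C^\infty$ strong stable/unstable foliations. One then solves the cohomological equation $\dot\sigma=R$ with $\sigma$ flat along $L$: after a Borel–type decomposition $R=R^{s}+R^{u}$ with $R^{s}$ flat along $W^{cu}$ and $R^{u}$ flat along $W^{cs}$, the piece $R^{u}$ is integrated along the unstable fibres backward in time and $R^{s}$ along the stable fibres forward in time, the flatness ensuring convergence and that $\sigma$ is $C^\infty$ and flat along $L$; a routine iteration then absorbs the remaining (flat times flat) error $R\,\partial_u\sigma$. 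The function $H:=u-\sigma$ satisfies $\dot H\equiv0$ and $dH(0)\ne0$, so, pulled back to the original coordinates, it is the desired $C^\infty$ first integral, nonconstant on every neighbourhood of $0$.

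The main obstacle, I expect, is precisely this last construction: the smooth normalization along the line of equilibria and, above all, the solution of the attached cohomological equation with data flat along $L$ in the presence of \emph{both} contracting and expanding transverse directions, where generic orbits leave every fixed neighbourhood and no global-in-time integration along orbits is available. This is exactly the regime in which an analytic first integral may fail to exist (Theorem C(b)), so the argument must exploit that in the $C^\infty$ category the small–divisor obstructions are invisible and that a datum flat along $L$ can be integrated out along the strong stable and unstable foliations of $L$.
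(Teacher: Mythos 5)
Your sufficiency argument is essentially the paper's: the same reduction to a straightened line of equilibria \eqref{e3}, the same formal Poincar\'e--Dulac normalization with vanishing first component followed by Borel summation, and the same key device --- a Whitney--type splitting of the flat remainder into a piece flat along the centre--stable manifold and a piece flat along the centre--unstable manifold, each integrated along the flow in the time direction in which orbits approach its manifold of flatness. The one genuine variation is the last step: you solve the scalar cohomological equation $\mathcal X\sigma=R$ and take $H=u-\sigma$ directly, whereas the paper constructs a $C^\infty$ conjugacy of \eqref{e6} to the model \eqref{e7} (which has the exact integral $z_1$) by solving the vector--valued homological equation \eqref{e8} along a path of vector fields. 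Your route avoids the path method and works with a scalar unknown, which is more economical, but it buys nothing on the hard part: convergence and $C^k$--smoothness of the improper integral still require the cut--off/globalization of the vector field and the estimates $\|\partial^mP^{-j}\|\le K_k^j$ against the flat decay $e^{-jN\mu_M}$ that occupy Claims 2 and 3 of the paper; you correctly identify this as the main obstacle but do not carry it out. Two smaller points: the closing ``iteration absorbing $R\,\partial_u\sigma$'' is unnecessary and its convergence is not obviously routine --- if you integrate $R^{s},R^{u}$ along the orbits of the full normalized field rather than of the model field, the equation is solved exactly in one step, which is precisely what the paper's formula \eqref{e99} does; and the normal form of the $\mathbf z_2$--equation need not be linear in $\mathbf z_2$ (resonances $\langle\widetilde m,\widetilde\lambda\rangle=\lambda_j$ are not excluded by \eqref{e2}), though this is harmless since only flatness of the first component is used.

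On necessity there is a genuine gap. For a first integral with nonconstant Taylor series your argument via Theorem B is fine and matches the paper. But the flat case you single out cannot be disposed of by the ``propagation of constancy'' you sketch, because the statement you are aiming at there is false: for $\dot x_1=x_1^2$, $\dot x_2=x_2$, $\dot x_3=-\sqrt2\,x_3$ the origin is an isolated singularity and \eqref{e2} holds, yet the function equal to $\exp\bigl(-(|x_2|^{\sqrt2}|x_3|)^{-1}\bigr)$ for $x_2x_3\ne0$ and to $0$ elsewhere is a nonconstant $C^\infty$ first integral, flat on the coordinate hyperplanes $x_2=0$ and $x_3=0$ --- in particular identically zero on the centre, stable and unstable manifolds, so no argument propagating constancy outward from the centre manifold can force it to be locally constant. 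The theorem, and the paper's proof (which passes immediately to the $\infty$--jet via Proposition \ref{p11}), must therefore be read with ``$C^\infty$ first integral'' meaning one whose Taylor series at the origin is nonconstant; under that convention your flat case does not arise and the extra step should simply be deleted rather than attempted.
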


The next one is an answer to Problem 2.
Let $\mathfrak K$ be the set of analytic differential systems of form \eqref{e1} with the same linear part, which satisfies the conditions \eqref{e2}.
\begin{theorem}\label{t2}
Assume that the origin is a nonisolated singularity of all systems in $\mathfrak K$. Let $\mathcal K$ be any finite dimensional subspace of $\mathfrak K$.  The following statements hold.
\begin{itemize}
\item[$(a)$] If $\mathcal K$ contains an element, which has only formal but not analytic first integral in a neighborhood of the origin, then all elements in $\mathcal K$ except perhaps a pluripolar subset has also this property.

\item[$(b)$] If $\mathcal K$ has a nonpluripolar subset whose any element has an analytic first integral in a neighborhood of the origin, then all systems in $\mathcal K$ have this property.
\end{itemize}
\end{theorem}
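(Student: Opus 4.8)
The plan is to reduce Theorem~\ref{t2} to a single dichotomy for the convergence set of a canonically normalized formal first integral, and then to settle that dichotomy by an extremal--function argument from pluripotential theory. Fix a basis of $\mathcal K$, identify $\mathcal K$ with $\R^N$, and complexify the parameter so that $c$ runs over $\C^N$; since $\mathcal K$ is a linear subspace of $\mathfrak K$, the family is $\dot x=Ax+f(x;c)$ with $f(x;c)=\sum_{j=1}^N c_jg_j(x)$ \emph{linear} in $c$ and $g_j=o(|x|)$ analytic. For every real $c$ the singularity is nonisolated, so by Theorem~B a formal first integral exists. I will show (i) there is a canonical one, $\Phi_c(x)=x_1+\sum_{|k|\ge 2}\phi_k(c)\,x^k$, with each $\phi_k$ a polynomial in $c$ of degree $\le|k|-1$; and (ii) the system at $c$ has a local analytic first integral if and only if $\Phi_c$ converges near the origin. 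Granting (i)--(ii), put $E=\{c\in\C^N:\Phi_c\text{ converges near }0\}$; then the set of systems in $\mathcal K$ with an analytic first integral is $E\cap\R^N$, and everything reduces to: \emph{$E$ is either all of $\C^N$ or a pluripolar set}. Indeed, for (a) the hypothesis gives $E\cap\R^N\ne\R^N$, hence $E\ne\C^N$, hence $E$ and its real trace are pluripolar; for (b) the hypothesis makes $E\cap\R^N$, hence $E$, nonpluripolar, hence $E=\C^N$ and every system in $\mathcal K$ is analytically integrable. (Here a subset of the real space $\mathcal K$ is called pluripolar if it is the trace of a pluripolar subset of $\C^N$.)

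For (i), compare homogeneous parts in $\mathcal{X}_c\Phi_c=0$, where $\mathcal{X}_c=\sum_i(\lambda_ix_i+f_i(x;c))\partial_{x_i}$: the degree--$m$ part obeys $\mathcal{X}_0\Phi_{c,m}=R_m$ with $\mathcal{X}_0=\sum_i\lambda_ix_i\partial_{x_i}$, and $R_m$ a polynomial in $c$ and in $\Phi_{c,2},\dots,\Phi_{c,m-1}$. By \eqref{e2} the only resonance is $\langle(m,0,\dots,0),\lambda\rangle=0$, so $\mathcal{X}_0$ is invertible off the one--dimensional span of $x_1^m$, and solvability at step $m$ is exactly the vanishing of the $x_1^m$--coefficient of $R_m$; this holds for every real $c$ by Theorem~B and hence identically in $c$, since $R_m$ is polynomial. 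Fixing the free $x_1^m$--coefficient of $\Phi_{c,m}$ to be $0$ makes the solution unique, forces $\phi_k$ to be polynomial in $c$, and an immediate induction gives $\deg_c\phi_k\le|k|-1$. The same uniqueness shows that every formal first integral of the system is a formal power series in $\Phi_c$, using that \eqref{e2} permits at most one functionally independent first integral (cf. \cite{CYZ2008,LLZ2003}).

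For (ii), one direction is trivial. Conversely, let $\Psi$ be an analytic first integral with $\Psi(0)=0$; by (i), $\Psi=h(\Phi_c)$ for a formal one--variable $h$ with $h(0)=0$, $h\not\equiv0$. Since $\Phi_c$ contains no pure powers $x_1^m$ with $m\ge2$, one has $\Phi_c(x_1,0,\dots,0)=x_1$, hence $h(x_1)=\Psi(x_1,0,\dots,0)$ is convergent. If $\operatorname{ord}h=1$ then $h$ is invertible and $\Phi_c=h^{-1}(\Psi)$ converges. If $\operatorname{ord}h=p\ge2$, write $h(t)=k(t)^p$ with $k(t)=t\,\hat h(t)^{1/p}$ and $\hat h$ a convergent unit, so $k$ is convergent and invertible and $\Psi=k(\Phi_c)^p$; thus $\Psi$ is a $p$-th power of a formal series of order $1$, and, after a linear change taking that series' linear part to $x_1$, Weierstrass preparation shows its $p$-th root is analytic, whence $\Phi_c=k^{-1}(\,\cdot\,)$ converges.

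Finally the dichotomy. For $c$ with $\phi_k(c)\ne0$ set $u_k(c)=\frac1{|k|}\log|\phi_k(c)|$; this is plurisubharmonic on $\C^N$, and $\deg_c\phi_k\le|k|-1$ yields $u_k(z)\le\log^+|z|+C_k$, so every $u_k$ lies in the Lelong class $\mathcal L(\C^N)$. Also $\Phi_c$ converges near $0$ iff $\limsup_{|k|\to\infty}u_k(c)<+\infty$, so $E=\bigcup_{M,K\in\N}F_{M,K}$ with $F_{M,K}=\{c:u_k(c)\le M\text{ for all }|k|\ge K\}$ closed. Assume $E$ is nonpluripolar. As pluripolar sets are stable under countable unions, some $F_{M_0,K_0}$ intersected with a ball is a bounded nonpluripolar set $F$, and hence its Siciak extremal function satisfies $V_F^*\in\mathcal L(\C^N)$ --- in particular $V_F^*$ is finite everywhere. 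For each $|k|\ge K_0$, $u_k-M_0\in\mathcal L(\C^N)$ and $u_k-M_0\le0$ on $F$, hence $u_k-M_0\le V_F^*$ on all of $\C^N$; therefore $\limsup_ku_k(z)\le M_0+V_F^*(z)<+\infty$ for every $z$, i.e. $E=\C^N$. The step I expect to cost genuine work is (i): proving that the $\phi_k(c)$ are polynomials whose degree in $c$ grows at most linearly in $|k|$, together with the uniqueness and functional--dependence facts feeding (ii). That linear degree bound is precisely what puts the normalized $u_k$ into the Lelong class and lets a single nonpluripolar set's extremal function majorize all of them simultaneously; for coefficients of arbitrary holomorphic dependence the convergence locus can be nonpluripolar yet proper (for instance $\sum_k z^{k^2}t^k$ converges in $t$ near $0$ exactly when $|z|\le1$), so the dichotomy rests on the arithmetic of the normal--form recursion, not on soft pluripotential theory alone.
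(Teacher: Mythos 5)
Your proposal is correct and follows essentially the same route as the paper: a normal--form recursion producing a canonical formal first integral whose degree--$k$ homogeneous part has coefficients polynomial of degree at most $k-1$ in the parameters (the paper's Lemma \ref{l11}), followed by the Bernstein--Walsh/extremal--function dichotomy for the convergence locus in $\mathbb C^N$ (the paper's Lemma \ref{l12}, after P\'erez--Marco). The only notable differences are presentational: you run the recursion directly on the original family rather than first passing to the reduced form \eqref{e3}, and you make explicit the equivalence between analytic integrability and convergence of the canonical formal first integral, a step the paper handles implicitly via Proposition \ref{p11}.
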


Recall that the condition on the nonisolated singularity at the origin is necessary for the existence of local analytic or formal first integrals.
A pluripolar set is a subset of $\mathbb C^m$ for some $m\in\mathbb N$, and it is of Lebesgue measure zero, see e.g. \cite{Kl} or \cite{Pe}. A detail definition on pluripolar set will be given in Section \ref{st2}, where we prove Theorem \ref{t2}.

Theorem \ref{t2} has the next consequence.

\begin{corollary}\label{c1}
Let $\mathfrak P$ be the set of polynomial differential systems of form \eqref{e1} with the same linear part and a uniformly bounded degree, which satisfy the conditions \eqref{e2} and have the origin as a nonisolated singularity. Then either $\mathfrak P$ has all its elements having an analytic first integral in a neighborhood of the origin, or $\mathfrak P$ has only a pluripolar subset whose elements can have analytic first integrals near the origin.
\end{corollary}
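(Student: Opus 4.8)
The corollary is not quite a special case of Theorem~\ref{t2}: imposing that the origin be a nonisolated singularity makes $\mathfrak P$ an algebraic variety rather than a linear space, so the plan is to re-run the proof of Theorem~\ref{t2} — a pluripotential argument carried out in the space of coefficients — over that variety. First I would fix the common linear part and the degree bound $d$ and identify a polynomial system \eqref{e1} of degree $\le d$ with that linear part with the vector $a\in\mathbb{R}^N$ of coefficients of its nonlinear part $f$; write $\mathbb{C}^N$ for the complexification. Because \eqref{e2} forces $\lambda_2,\dots,\lambda_n\neq 0$, the linear part splits off an invertible block $\tilde A$; the solutions near the origin of $\tilde A\tilde x+\tilde f(x_1,\tilde x)=0$ form the graph $\{\tilde x=\tilde\varphi_a(x_1)\}$ of a unique power series $\tilde\varphi_a$ whose Taylor coefficients are polynomials in $a$ (only the fixed matrix $\tilde A$ is inverted), and the origin is a nonisolated singularity precisely when $f_1$ vanishes identically on that graph, i.e. when $f_1(x_1,\tilde\varphi_a(x_1))\equiv 0$, which is the simultaneous vanishing of a sequence of polynomials $c_2(a),c_3(a),\dots$ in $a$. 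By the Hilbert basis theorem finitely many $c_j$ cut out the same set, so $\mathfrak P=V\cap\mathbb{R}^N$ for the complex affine algebraic variety $V=\{a\in\mathbb{C}^N:c_j(a)=0\text{ for all }j\}$, which in general has several irreducible components.

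Next I would invoke the mechanism behind Theorem~\ref{t2}. Solving the homological equations order by order, with the normalization that sets the free Taylor coefficients along the $x_1$-axis to zero, produces for every $a\in\mathbb{C}^N$ a formal series $\widehat\Phi_a(x)=x_1+\sum_{|k|\ge 2}\phi_k(a)x^k$ whose coefficients $\phi_k$ are polynomials in $a$; this series is a genuine formal first integral exactly when $a\in V$, and then, by the uniqueness in Theorem~B (every local analytic first integral of such a system is an analytic function of $\widehat\Phi_a$, so none is overlooked), the system at $a\in V$ has a local analytic first integral if and only if $\widehat\Phi_a$ converges. The proof of Theorem~\ref{t2} establishes that along a finite-dimensional linear family the convergence locus of $\widehat\Phi_a$ is either the whole family or pluripolar, and it uses only the polynomial dependence of the $\phi_k$ on $a$, localization, and plurisubharmonic / Hartogs-type arguments — never the linearity of the parameter space. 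I would therefore run it on each irreducible component $Z$ of $V$, working in local holomorphic charts on the normalization of $Z$ (a connected complex manifold) and discarding the proper subvariety $V_{\mathrm{sing}}$, itself pluripolar in $V$; this gives, on each $Z$, that the convergence locus is all of $Z$ or pluripolar in $Z$, the two alternatives being glued across overlapping charts because a nonempty open subset of a chart is never pluripolar. Assembling the components yields the claimed dichotomy for $\mathfrak P=V\cap\mathbb{R}^N$.

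The step I expect to be the main obstacle is exactly this transfer of the pluripotential argument of Theorem~\ref{t2} from a linear parameter space to the variety $V$. One must interpret ``pluripolar subset of $\mathfrak P$'' intrinsically to $V$ — a proper subvariety of $\mathbb{C}^N$ is already pluripolar in $\mathbb{C}^N$, which would render an ambient reading vacuous — and one must check that the plurisubharmonic regularization / Hartogs step, hence the ``all-or-pluripolar'' alternative for the convergence locus, really survives on a singular and reducible $V$; the reduction to charts on the normalizations of the irreducible components, together with the standard facts that proper analytic subsets of complex manifolds are pluripolar and countable unions of pluripolar sets are pluripolar, is what carries this through, and when $V$ is reducible the conclusion is naturally read per irreducible component. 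Everything else — the polynomial dependence of the $\phi_k$ and $c_j$ on $a$, the identification of nonisolatedness with membership in $V$, and the fact that the normalized family $\widehat\Phi_a$ captures all local analytic first integrals — is routine given Theorems~B and~\ref{t2}.
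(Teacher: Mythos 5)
You have put your finger on exactly the issue that the paper's own one--paragraph proof passes over: the requirement that the origin be a nonisolated singularity is a nontrivial system of polynomial equations on the coefficient vector (your $c_j(a)=0$), so $\mathfrak P$ is a proper algebraic subvariety of the coefficient space rather than the linear space that the remark after Corollary \ref{c1} and the proof treat it as. The paper simply writes the nonlinear parts as $t_1\mathcal N_1+\ldots+t_m\mathcal N_m$ with $t$ ranging over all of $\mathbb R^m$ and asserts that the arguments of Lemma \ref{l11} (hence Lemma \ref{l12}) apply verbatim; your observations that this does not literally go through, that on a proper subvariety the ambient notions of pluripolarity and Lebesgue measure make the dichotomy vacuous, and that the statement must be read intrinsically to the variety $V$, are all accurate, as is your reduction to the convergence locus of a canonical formal first integral with coefficients polynomial in $a$.

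However, your proposed repair has a genuine gap at precisely the step you flag as the main obstacle, and it is not closed by the ``standard facts'' you invoke. The engine of Lemma \ref{l12} is the Bernstein--Walsh inequality, which is global: it converts a bound for a polynomial $P$ of degree $k$ on a non--pluripolar compact $\Lambda\subset\mathbb C^m$ into $|P(z)|\le\|P\|_\Lambda\exp\bigl(kV_\Lambda(z)\bigr)$ on all of $\mathbb C^m$, and its proof uses the logarithmic growth of $\tfrac1k\log|P|$ at infinity, i.e.\ the degree. Once you pass to a local holomorphic chart on the normalization of an irreducible component $Z$ of $V$, the pulled--back coefficients $H_j$ are merely holomorphic, the degree is lost, and a bound on a non--pluripolar compact does not propagate: every Hartogs/upper--regularization/negligible--set argument for the psh functions $u_j=\tfrac1{|j|}\log|H_j|$ requires them to be locally uniformly bounded above beforehand, and examples such as $H_j(z)=e^{j^2z}$ (bounded by $1$ on the non--polar set $\{\operatorname{Re}z\le 0\}$, wildly divergent elsewhere) show that no such control comes for free. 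What is actually needed is a Bernstein--Walsh inequality intrinsic to the algebraic variety $V$ --- Sadullaev's theorem that the Siciak--type extremal function of a non--pluripolar compact subset of an analytic set is locally bounded on the set if and only if the set is algebraic --- or, equivalently, a Noether normalization $\pi:Z\to\mathbb C^{\dim Z}$ followed by the ambient Bernstein--Walsh lemma applied to the elementary symmetric functions of the values of $H_j$ on the fibres of $\pi$, whose degrees are still $O(|j|)$. This is a substantive input, special to algebraic (as opposed to general analytic) parameter varieties, and without it the ``all--or--pluripolar'' alternative on each component is not established. A smaller caveat: the identification of ``has a local analytic first integral'' with ``the canonical series $\widehat\Phi_a$ converges'' needs the normalization of Proposition \ref{p11} to preserve analyticity; the paper uses the same identification tacitly in Lemma \ref{l12}, so you are no worse off there.
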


We remark that the set $\mathfrak P$ is a finite dimensional space formed by the coefficients of the nonlinear monomials of the systems in $\mathfrak P$.

Corollary \ref{c1} solves the open problem 2 for polynomial differential systems \eqref{e1}. Since a pluripolar set has Lebesgue measure zero, if $\mathfrak P$ has a positive Lebesgue measure subset, whose each element has an analytic first integral in a neighborhood of the origin, then all systems in $\mathfrak P$ have analytic first integrals near the origin. In other words, if $\mathfrak P$ has an element which has only divergent formal first integral in a neighborhood of the origin, then generic systems in $\mathfrak P$ have only divergent formal first integrals near the origin.

All analytic differential systems of type \eqref{e1} form an infinitely dimensional space in the sense of Theorem \ref{t2} and Corollary \ref{c1}. In this case, it is still an open problem \textit{whether the generic systems have only divergent first integrals at the origin.}

This paper is organized as follows. The proof of Theorems \ref{t1} is given Section \ref{st1}, and the proofs of Theorem \ref{t2} and Corollary \ref{c1} will be presented in Section \ref{st2}.

\section{Proof of Theorem \ref{t1}}\label{st1}

\noindent {\it Necessity}. It follows from \cite{LLZ2003,Zh2017}. Here for completeness we present its proof. By the assumption of the theorem and \cite[Theorem 1 $(b)$]{DRZ2016} (see the next Proposition \ref{p11}), system \eqref{e1} has a local smooth first integral of the form $F(x)=x_1+o(x)$  in $(\mathbb R^n,0)$. Taking the invertible change of coordinates $v=\Phi(x)$ in $(\mathbb R^n,0)$ with
\[
v_1=F(x),\ \ \mathbf v_2=(v_2,\ldots,v_n)^\tau=(x_2,\ \ldots, \ x_n)^\tau,
\]
with $\tau$ representing the transpose of a matrix, system \eqref{e1} in this new coordinate system becomes
\[
\dot v_1=0,\quad \dot{\mathbf v}_2=B\mathbf v_2+\mbox{h.o.t.}
\]
This last system has the line of singularities $\mathbf v_2=0$, and consequently system \eqref{e1} has a smooth curve fulfilling singularities and passing the origin. The necessity follows.

For readers' convenience we recall \cite[Theorem 1 $(b)$]{DRZ2016}, which will also be used in the proof of Theorem \ref{t2}. For system \eqref{e1} set
\[
\mathcal R:=\{m\in\mathbb Z_+^n| \ \langle m,\lambda\rangle=0\}.
\]

\begin{proposition}\label{p11}
Assume that $\mathcal R$ has $d<n$ $\mathbb Q_+$--linearly independent elements. If system \eqref{e1} has $d$ functionally independent analytic or formal first integrals, then it has $d$ functionally independent first integrals of the form
 \[
    H_1(x)=x^{\alpha_1}+h_1(x),\,\ldots,\, H_{d}(x)=x^{\alpha_{d}}+h_{d}(x),
 \]
where $\alpha_1,\ldots,\alpha_{d}$ are $\mathbb Q_+$--linearly independent elements of $\mathcal R$, and each $h_j(x)$, $j=1,\,\ldots,\, d$,  consists of nonresonant monomials in $x$ of degree larger than $|\alpha_j|$.
\end{proposition}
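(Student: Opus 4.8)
The plan is to study the formal first integrals of \eqref{e1} through the grading by total degree together with the semisimple part of $A$. Write $A=S+N$ with $S$ semisimple, $N$ nilpotent and $SN=NS$, and after a (possibly complex) linear change of coordinates take $S=\mathrm{diag}(\lambda_1,\dots,\lambda_n)$; since a complex first integral splits into two real ones through its real and imaginary parts, nothing is lost. Let $\mathcal L=\mathcal L_0+\mathcal N_0+\mathcal R_*$ be the Lie derivative along \eqref{e1}, where $\mathcal L_0=\sum_i\lambda_i x_i\partial_{x_i}$ is the degree-preserving Euler-type operator attached to $S$, $\mathcal N_0$ is attached to $N$, and $\mathcal R_*$ strictly raises the degree. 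Since $\mathcal L_0 x^m=\langle m,\lambda\rangle x^m$, on each homogeneous component $\ker\mathcal L_0$ is the span of the resonant monomials $\{x^m:m\in\mathcal R\}$ and $\mathcal L_0$ is invertible on the span of the nonresonant ones. Comparing lowest-degree terms in $\mathcal LG=0$ for a nonzero formal first integral $G=G^0+(\text{higher order})$ gives $(\mathcal L_0+\mathcal N_0)G^0=0$; as $\mathcal L_0$ is semisimple, $\mathcal N_0$ nilpotent on each component, and $[\mathcal L_0,\mathcal N_0]=0$, one has $\ker(\mathcal L_0+\mathcal N_0)\subseteq\ker\mathcal L_0$, so $G^0$ is a polynomial in resonant monomials, i.e. a first integral of $\dot x=Ax$ lying in the semigroup algebra $\mathcal A:=\mathbb C[x^m:m\in\mathcal R]$, whose Krull dimension equals the rank $d$ of the subgroup of $\mathbb Z^n$ generated by $\mathcal R$.

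Starting from the $d$ given functionally independent first integrals, the key reduction is to arrive at $d$ functionally independent first integrals whose leading forms are single monomials $x^{\alpha_1},\dots,x^{\alpha_d}$, with $\alpha_j\in\mathcal R$ being $\mathbb Q_+$-linearly independent, and $|\alpha_1|\le\dots\le|\alpha_d|$. Everything here is internal to the graded toric algebra $\mathcal A$: the leading-form map $G\mapsto G^0$ is multiplicative, so the leading forms generate a graded subalgebra of $\mathcal A$, and a flat-degeneration argument shows this subalgebra again has transcendence degree $d$; one then uses that every element of $\mathcal A$ is a polynomial in resonant monomials, together with a fixed local monomial order refining the total degree, to triangulate the leading forms and peel off the monomials $x^{\alpha_j}$. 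In doing so one wants the $\alpha_j$ chosen so that the rational cone they span captures $\mathcal R$—equivalently so that $\mathcal A$ is integral over $\mathbb C[x^{\alpha_1},\dots,x^{\alpha_d}]$—which is a question about the monoid $\mathcal R$ and is used in the last step. \textbf{I expect this reduction to be the main obstacle:} it is genuinely more than linear algebra, since an arbitrary linear combination of resonant monomials of a fixed degree need not reduce to a monomial, so one must exploit the ring structure of the space of first integrals (products of the given integrals of possibly different weights) and the combinatorics of $\mathcal R$; this is where the hypothesis that one has the maximal number $d$ of functionally independent integrals is genuinely consumed.

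It remains to remove the resonant tails. Fix the integrals just obtained, with monomial leading forms, and process each by increasing degree. If $H=x^\alpha+(\text{higher order})$ is one of them and $x^\beta$, with $\beta\in\mathcal R$ and $|\beta|>|\alpha|$, is a resonant monomial of minimal degree occurring beyond the leading term, then—by the choice of the $\alpha_j$—a positive multiple of $\beta$, hence $x^\beta$ up to a monomial factor, is a product of powers of $x^{\alpha_1},\dots,x^{\alpha_d}$; subtracting from $H$ the corresponding monomial in $H_1,\dots,H_d$ with the matching coefficient cancels $x^\beta$ while leaving the leading form and every term of degree $\le|\beta|$ untouched, the only new terms being of strictly larger degree. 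Iterating over increasing degree converges formally and produces $H_j=x^{\alpha_j}+h_j(x)$ with each $h_j$ composed solely of nonresonant monomials of degree $>|\alpha_j|$. Functional independence is preserved because every modification is triangular with respect to the leading monomials $x^{\alpha_1},\dots,x^{\alpha_d}$, whose $\mathbb Q_+$-linear independence keeps the Jacobian of $(H_1,\dots,H_d)$ of rank $d$; and since all operations used—polynomial combinations and subtraction of finite products—preserve convergence, the $H_j$ are analytic whenever the given integrals are.
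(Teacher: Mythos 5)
First, a point of reference: the paper itself does not prove Proposition \ref{p11} at all --- it is recalled verbatim from \cite[Theorem 1 $(b)$]{DRZ2016} --- so there is no in-paper proof to compare yours against, and your proposal must stand on its own. Your first paragraph is correct and standard: the lowest-degree homogeneous part of a formal first integral is annihilated by $\mathcal L_0+\mathcal N_0$, hence by the semisimple part $\mathcal L_0$, hence is a homogeneous linear combination of resonant monomials. But both remaining steps have genuine gaps. The central one you flag yourself: passing from $d$ functionally independent integrals whose leading forms are homogeneous combinations of resonant monomials to $d$ integrals whose leading forms are \emph{single} monomials $x^{\alpha_j}$. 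The SAGBI/flat-degeneration idea you sketch yields elements of the leading-form algebra whose \emph{initial monomials} with respect to a monomial order have independent exponents; an element whose initial monomial is $x^\gamma$ is not an element equal to $x^\gamma$. Polynomial combinations alone cannot perform the reduction: if $\mu,\nu\in\mathcal R$ are independent exponents of equal degree, the algebra of symmetric functions in $x^\mu,x^\nu$ has transcendence degree $2$ yet contains no monomials off the ray $\mathbb Z_+(\mu+\nu)$, so ruling such leading-form algebras out requires using that one has the full integral algebra of an actual vector field --- which is essentially the content of the proposition and is not supplied.

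The tail-removal step fails for a concrete combinatorial reason. To cancel a resonant term $c\,x^\beta$ in $H_j$ by subtracting $c\,H_1^{c_1}\cdots H_d^{c_d}$ you need $\beta=c_1\alpha_1+\ldots+c_d\alpha_d$ with $c_i\in\mathbb Z_+$; knowing only that some positive multiple $k\beta$ is such a combination does not help, and ``up to a monomial factor'' cannot be repaired because a monomial times a first integral is not a first integral. Worse, the condition you impose on the $\alpha_j$ --- that their rational cone contain $\mathcal R$ --- is unattainable in general: for $\lambda=(1,-1,2,-2)$ one has $d=3$ while the cone of $\mathcal R$ has the four extreme rays generated by $(1,1,0,0)$, $(0,0,1,1)$, $(2,0,0,1)$, $(0,2,1,0)$; since an extreme ray can only be generated by elements lying on it, no three elements of $\mathcal R$ span a cone containing $\mathcal R$, and a resonant tail monomial on the omitted ray is unreachable by products of the $H_j$. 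A workable argument has to treat the resonant components of the homogeneous parts of $H_j$ as free parameters in the degree-by-degree solution of the first-integral equation and choose them to vanish, rather than post-process a given integral by subtracting products. I note that in the only instance this paper actually uses --- the hypotheses \eqref{e2}, where $d=1$ and $\mathcal R=\mathbb Z_+e_1$ --- both problematic steps trivialize (the resonant monomials of a fixed degree form a one-dimensional space spanned by a power of $x_1$, and $\alpha_1=e_1$ generates the monoid $\mathcal R$), so your scheme does cover the special case needed here; as a proof of the proposition as stated it is incomplete.
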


\noindent{\it Sufficiency}.  Without loss of generality we can assume that the linear part $A$ of system \eqref{e1} is of the form
\[
A=\left(\begin{array}{cc}
0 &  \widetilde{\mathbf 0}\\
{\widetilde{\mathbf 0}}^\tau &  B
\end{array}\right),
\]
where $\widetilde{\mathbf 0}$ is the $n-1$ dimensional row vector with its entries all zeros,  and $B$ is a square matrix of order $n-1$ with the $n-1$ tuple of eigenvalues $\widetilde \lambda$ and is in lower triangular normal form. Under this consideration system \eqref{e1} is of the form
\begin{equation}\label{e1-1}
\begin{split}
\dot x_1&=f_1(x),\\
\dot {\mathbf x}_2&=B\mathbf x_2+\mathbf f_2(x).
\end{split}
\end{equation}
Hereafter we denote by $\mathbf a_2$ the $n-1$ dimensional column vector $(a_2,\ldots,a_n)^\tau$ for any letter $a$.

Let $\mathbf x_2=\mathbf \phi(x_1)$ be the unique solution of the equation $B\mathbf x_2+\mathbf f_2(x)=0$ in a neighborhood of the origin, which is analytic and can be obtained by the implicit function theorem. Since the singularity at the origin is not isolated, it forces that $f_1(x_1,\mathbf \phi(x_1))\equiv 0$ in a neighborhood of the origin.  Taking the coordinate change of variables
\begin{equation}\label{e1-2}
y_1=x_1,\quad \mathbf y_2=\mathbf x_2-\mathbf \phi(x_1),
\end{equation}
system \eqref{e1-1} can be written in an analytically equivalent way as
\begin{equation}\label{e3}
\begin{split}
\dot y_1&=F_1(y)\mathbf y_2,\\
\dot {\mathbf y}_2&=B\mathbf y_2+F_2(y)\mathbf y_2,
\end{split}
\end{equation}
where $ F_1$ is an $n-1$ dimensional row vector valued function and $ F_2$ is a square matrix valued function of order  $n-1$. Since the transformation \eqref{e1-2} is invertible and analytic, to prove Theorem \ref{t1} is equivalent to prove that system \eqref{e3} has a $C^\infty$ first integral in $(\mathbb R^n,0)$.

\noindent{\it Step} 1. {\it Normal form}.  By Poincar\'e--Dulac normal form theorem (see e.g. \cite{Zhb}) and the assumption \eqref{e2}, there exists a near identity distinguished formal change of coordinates
\begin{equation}\label{e4}
y=\widehat \Phi(z)=z+\widehat \phi(z),
\end{equation}
under which system \eqref{e3} is transformed to its distinguished normal form
\begin{equation}\label{e5}
\begin{split}
\dot z_1&=0,\\
\dot {\mathbf z}_2&=B\mathbf z_2+\widehat G_2(z)\mathbf z_2,
\end{split}
\end{equation}
where $\widehat G_2$ is a matrix valued formal series in $z$ of order $n-1$. We remark that in general by Poincar\'e--Dulac normal form theorem the first equation of \eqref{e5} should have the form $\dot z_1=\widehat G_1(z_1)$, with  $\widehat G_1(z_1)$ a formal series in $z_1$, then the nonisolate of the singularity at the origin forces $\widehat G_1(z_1)\equiv 0$, otherwise we take a sufficiently high order cut--off of \eqref{e4} which will induce that the transformed system has the origin as a singularity of finite multiplicity, a contradiction with the nonisolate of the singularity. Since the normalization is chosen to be distinguished, it compels that $\widehat \phi$ consists of nonresonant monomials and so $\widehat \phi(z)=\widehat \phi(\mathbf z_2)$.

By the Borel lemma, there exists a $C^\infty$ function $\Phi(\mathbf z_2)$ such that $\mbox{jet}_0^\infty \Phi(\mathbf z_2)=\widehat \phi(\mathbf z_2)$. Hereafter $\mbox{jet}_0^\infty \Phi$ denotes the Taylor series of $\Phi$ at the origin. Taking the $C^\infty$ change of coordinates $y=z+\Phi(\mathbf z_2)$, system \eqref{e3} is $C^\infty$ equivalent, in a neighborhood of the origin, to
\begin{equation}\label{e6}
\begin{split}
\left(\begin{array}{c}
\dot z_1\\
\dot{\mathbf z}_2\end{array}\right)
&=\left(I+\partial_z\Phi(\mathbf z_2)\right)^{-1}\left(\begin{array}{c}
F_1(y)\\
B+F_2(y)\end{array}\right)\left(\mathbf z_2+\mathbf\Phi_2(\mathbf z_2)\right)\\
&=\left(\begin{array}{c}
 W_1(z)\mathbf z_2,\\
 (B+W_2(z))\mathbf z_2
 \end{array}\right)
\end{split}
\end{equation}
where $W_1$ is an $n-1$ dimensional $C^\infty$ row vector valued function and $W_2$ is a $C^\infty$ square matrix valued function of order $n-1$, $\partial_z\Phi(\mathbf z_2)$ is the Jacobian matrix of $\Phi$ in $z$ and $\mathbf\Phi_2$ is the last $n-1$ components of $\Phi$. Note that $W_1(z)$ is infinitely flat at the origin, i.e. $\mbox{jet}_0^\infty W_1(z)=0$. If we can prove that system \eqref{e6} has a $C^\infty$ first integral in $(\mathbb R^n,0)$, then system \eqref{e3} and consequently system \eqref{e1} has a $C^\infty$ first integral in $(\mathbb R^n,0)$.

Again applying the Borel lemma to $\widehat G_2(z)$ in \eqref{e5} provides a $C^\infty$ square matrix valued function $V_2(z)$ of order $n-1$ such that $\mbox{jet}_0^\infty V_2(z)=\widehat G_2(z)$. Consider
the system
\begin{equation}\label{e7}
\begin{split}
\dot z_1&=0,\\
\dot {\mathbf z}_2&=B\mathbf z_2+V_2(z)\mathbf z_2,
\end{split}
\end{equation}
Clearly system \eqref{e7} has the $C^\omega$ first integral $H=z_1$. If we can prove $C^\infty$ equivalence between the two systems \eqref{e6} and \eqref{e7}, then system \eqref{e6} will have a $C^\infty$ first integral in $(\mathbb R^n,0)$.


\noindent{\it Step }2. {\it $C^\infty$ equivalence between systems \eqref{e6} and \eqref{e7}.}

Denote by $\mathcal W$ the vector field associated to system \eqref{e6} and by $\mathcal V$ the vector field associated to system \eqref{e7}.
Observing that systems \eqref{e6} and \eqref{e7} both have the $z_1$--axis as their center manifold, and that $\mathcal W-\mathcal V=0$ when restricted to the center manifold $\mathbf z_2=0$. Moreover, the last construction shows that $\mathcal W-\mathcal V$ is infinitely flat at the origin.

The condition \eqref{e2} implies that all the eigenvalues of $B$ have nonvanishing real parts. By the stable manifold theorem system \eqref{e6} has an $s$--dimensional $C^\infty$ stable manifold $M_w^s$ and a $u$--dimensional $C^\infty$ unstable manifold $M_w^u$, and
system \eqref{e7} has also an $s$--dimensional $C^\infty$ stable manifold $M_v^s$ and a $u$--dimensional $C^\infty$ unstable manifold $M_v^u$. Taking the stable manifold $M_w^s$ and unstable manifold $M_w^u$ as coordinate spaces, and denoting them by $S$ and $U$ respectively. Denote by $C$ the $1$--dimensional center manifold, i.e. the $z_1$--axis. Then $\mathbb R^n=C\oplus S\oplus U$. Denote by $z_1,\ z_s$ and $z_u$ the coordinates on $C$, $S$ and $U$, and we still use $z$ to represent $(z_1, z_s, \ z_u)$.

Note that all the above manipulations are local. Take $\varphi:\ \mathbb R\rightarrow [0,1]$ be a $C^\infty$ function satisfying $\varphi(s)=1$ for $s\in[0,\ 1/2]$ and $\varphi(s)=0$ for $s\ge 1$. Set
\[
\widetilde{\mathcal W}(z)=\left(\begin{array}{c}
\sigma^{-1}\varphi(\|z\|)W_1(\sigma z)\mathbf z_2\\
\left(B+\sigma^{-1}\varphi(\|z\|)W_2(\sigma z)\right)\mathbf z_2
\end{array}\right),
\]
and
\[
\widetilde{\mathcal V}(z)=\left(\begin{array}{c}
0\\
\left(B+\sigma^{-1}\varphi(\|z\|)V_2(\sigma z)\right)\mathbf z_2
\end{array}\right),
\]
where $\sigma>0$ is a parameter, and it will be taken to be suitably small. We have the following facts:
\begin{itemize}
\item  $\widetilde{\mathcal W}$ and $\widetilde{\mathcal V}$ are two globally defined vector fields in $\mathbb R^n$.
 \item In the region $\|z\|\le 1/2$, after the linear change of coordinates $\sigma z\rightarrow z$  the vector fields $\widetilde W$ and $\widetilde V$ are respectively transformed to $W$ and $V$.
\end{itemize}
 So, to prove the $C^\infty$ equivalence between \eqref{e6} and \eqref{e7} is equivalent to verify the $C^\infty$ equivalence between the vector fields $\widetilde{\mathcal W}$ and $\widetilde{\mathcal V}$.

For doing so, set
\[
R(z)=\widetilde{\mathcal W}(z)-\widetilde{\mathcal V}(z).
\]
{\bf Claim 1:} $R$ has a $C^\infty$ decomposition as
\[
R(z)=R^+(z)+R^-(z),
\]
with $R^+,\ R^-\in C^\infty(\mathbb R^n)$ and satisfying
\[
\begin{split}
\partial^\alpha R^+(z)&=0, \quad z\in C\oplus U,\\
\partial^\alpha R^-(z)&=0, \quad z\in C\oplus S.
\end{split}
\]
for all $\alpha=(\alpha_1,\ldots,\alpha_n)\in \mathbb Z_+^n$, where $\partial^\alpha R= \dfrac{\partial^\alpha R}{\partial z_1^{\alpha_1}\partial z_2^{\alpha_2}\ldots\partial z_n^{\alpha_n}}$. Indeed, taking
\[
r_\alpha(z)=\left\{
\begin{array}{cl}
\partial^\alpha R(z), \quad & z\in C\oplus S,\\
0, \quad & z\in C\oplus U,
\end{array}
\right.
\]
Since $\partial^\alpha R(z)=0$ on $C$ for all $\alpha\in\mathbb Z_+^n$, it follows that there exists a $C^\infty$ function $R^+$ such that $\partial^\alpha R^+(z)=r_\alpha(z)$ for $z\in (C\oplus S)\cup (C\oplus U)$.
Here we have used the next facts (see e.g. \cite[Lemma 1.14]{Li2000}):

{\it For two linear subspaces $V$ and $W$ in $\mathbb R^n$ satisfying $V+W=\mathbb R^n$, and two functions $g,h\in C^\infty(\mathbb R^n,0)$ verifying
\[
\frac{\partial^\alpha g}{\partial z^\alpha}(z)=\frac{\partial^\alpha h}{\partial z^\alpha}(z), \quad \mbox{for all }\ z\in V\cap W, \ \ \  \alpha\in\mathbb Z_+^n,
\]
then there exists a $C^\infty$ function $F$ defined on a neighborhood of the origin such that
\[
\frac{\partial^\alpha F}{\partial z^\alpha}(z)=\left\{
\begin{split}
\dfrac{\partial^\alpha g}{\partial z^\alpha}(z), &\qquad z\in V,\\
\dfrac{\partial^\alpha h}{\partial z^\alpha}(z), &\qquad z\in W,
\end{split}
\right. \quad  \mbox{ for all } \ \  \alpha\in\mathbb Z_+^n.
\]}

\noindent Set $R^-=R-R^+$. Then
$\partial^\alpha R^-= 0$ on $C\oplus S$ for all $\alpha\in\mathbb Z_+^n$. This proves the claim.

In order to prove the $C^\infty$ equivalence between $\widetilde{\mathcal W}$ and $\widetilde{\mathcal V}$, we only need to prove the $C^\infty$ equivalence between $\widetilde{\mathcal V}$ and $\widetilde{\mathcal V}+R^-$ and  the $C^\infty$ equivalence between $\widetilde{\mathcal V}+R^-$ and $\widetilde{\mathcal W}\ $$(=\widetilde{\mathcal V}+R^-+R^+)$. Next we prove the former. The latter follows from similar arguments as those in the proof of the former.

For proving the $C^\infty$ equivalence between $\widetilde{\mathcal V}$ and $\widetilde{\mathcal V}+R^-$, we adopt the homological method (see e.g.  \cite[Chapter 1, Section 4]{IY2008} or \cite[Section 2.2]{Li2000}). Set
\[
\mathcal V_s=\widetilde{\mathcal V}+s R^-,\quad s\in[0,1].
\]
The homological method shows that if the homological equation
\begin{equation}\label{e8}
[h,\mathcal V_s]=R^-,
\end{equation}
has a $C^\infty$ vector valued solution $h$, then the vector fields $\mathcal V$ and $\mathcal V+R^-$ is $C^\infty$ equivalent, where $[\cdot,\ \cdot]$ is the Lie bracket of two vector fields. Recall that for any two smooth $n$--dimensional vector fields $\mathcal A$ and $\mathcal B$ in variables $z$, their Lie bracket is
\[
[\mathcal A,\ \mathcal B]:=\partial_z\mathcal B \mathcal A-\partial_z\mathcal A\mathcal B,
\]
which is again a vector field, and
\begin{equation}\label{e9}
[\mathcal A,\ \mathcal B]=-\left.\frac{d}{dt}\mathcal B_t\right|_{t=0},
\end{equation}
with $\mathcal B_t:=\left(\psi_{\mathcal A}^t\right)_*\mathcal B$, where $\psi_{\mathcal A}^t$ is the flow of the vector field $\mathcal A$ and $\left(\psi_{\mathcal A}^t\right)_*$ is the tangent map of the flow.

Let $\psi_t$ be the flow of the vector field $\mathcal V_s$ satisfying $\psi_0(z)=z$, and let $P(z)$ be the time $1$ map of the flow, i.e. $P(z)=\psi_1(z)$. Set $\Phi(t,z)=\partial_z\psi_t(z)$. By the fundamental theory of ordinary differential equations on variational equations, it follows that $\Phi(t,z)$ is the fundamental solution matrix of the matrix valued variational equation
 \[
 \frac{dZ}{dt}=\partial_w\mathcal V_s(w)|_{w= \psi_t(z)} Z, \qquad Z(0)=I,
 \]
 of the differential system $\dot z=\mathcal V_s$ along its solution $z(t)=\psi_t(z)$, where $I$ is the identity matrix. Set
\begin{equation}\label{e99}
h(z)=\int_{0}^{-\infty}\Phi^{-1}(s,z) R^-(\psi_s(z))ds,
\end{equation}
and according to \eqref{e9}, set $h_t(z)=\left(\psi_t\right)_*h(z)$. Direct calculations verify that if the series $h$ defined in \eqref{e99} is convergent, then one has
\begin{equation}\label{e9-1}
\begin{split}
h_t(z)&= \frac{\partial \psi_t}{\partial z} h\circ \psi_{-t}(z)=\Phi(t,\psi_{-t}(z))h(\psi_{-t}(z))\\
&=\Phi(t,\psi_{-t}(z)) \int_{0}^{-\infty}\Phi^{-1}(s,\psi_{-t}(z))R^-(\psi_s(\psi_{-t}(z)))ds \\
&=\Phi^{-1}(-t,z) \int_{0}^{-\infty}\Phi^{-1}(s,\psi_{-t}(z))R^-( \psi_{s-t}(z))ds \\
&= \int_{0}^{-\infty}\Phi^{-1}(s-t,z) R^-( \psi_{s-t}(z))ds \\
&= \int_{-t}^{-\infty}\Phi^{-1}(\tau,z)R^-( \psi_{\tau}(z))d\tau,
\end{split}
\end{equation}
where in the fourth equality we have used the identity
\[
I=\frac{\partial}{\partial z}\left(\psi_t\circ \psi_{-t}(z)\right)=\left.\frac{\partial \psi_t(\nu)}{\partial \nu} \right|_{\nu=\psi_{-t}(z)}\frac{\partial}{\partial z} \psi_{-t}(z) =\Phi(t,\psi_{-t}(z))\Phi(-t,z) ,
\]
and in the fifth equality we have applied the identity
\[
\Phi(s-t,z)=\frac{\partial \psi_{s-t}(z)}{\partial z}=\left.\frac{\partial \psi_s(\nu)}{\partial \nu}\right|_{\nu=\psi_{-t}(z)}
\frac{\partial\psi_{-t}(z)}{\partial z}=\Phi(s,\psi_{-t}(z))\Phi(-t,z).
\]
Then we get from \eqref{e9-1} that
\[
\left.\frac{d}{d t}h_t(z)\right|_{t=0}=\Phi^{-1}(0,z)R^-( \psi_{0}(z))=R^-(z).
\]
This together with \eqref{e9} verifies that $h$ is a solution of the homological equation \eqref{e8}.

The remaining is to prove the convergence of the integral defining $h$ and its $C^\infty$ smoothness. Write the integral in \eqref{e99} as
\[
h(z)=\sum\limits_{j=0}\limits^\infty\int_{-j}^{-j-1}\Phi^{-1}(s,z) R^-(\psi_s(z))ds.
\]
Direct calculations yield
\begin{align*}
\int_{-j}^{-j-1}&\Phi^{-1}(s,z) R^-(\psi_s(z))ds \\
&=\left(\partial_zP^{-j}(z)\right)^{-1}\int_{0}^{-1}\Phi^{-1}(s,P^{-j}(z)) R^-(\psi_s(P^{-j}(z)))ds,
\end{align*}
where we have used
\[
\Phi(s,z)=\frac{\partial \psi_s(z)}{\partial z}=\frac{\partial}{\partial z}\left(\psi_{s+j}\circ \psi_{-j}(z)\right)
=\Phi(s+j,P^{-j}(z))\partial_zP^{-j}(z).
\]
Hence $h(z)$ can be written as
\begin{equation}\label{e9-0}
h(z)
=\sum\limits_{j=0}\limits^\infty\left(\partial_zP^{-j}(z)\right)^{-1}G\left(P^{-j}(z)\right),
\end{equation}
where
\[
G(w)=\int_0^{-1}\Phi^{-1}(s,w)R^-(\psi_s(w))ds.
\]
Observe that to prove the $C^\infty$ smoothness of $h$ is equivalent to prove that $h$ is $C^k$ for any $k\in\mathbb N$.

For any given $k\in\mathbb N$, by the construction of the vector fields $\mathcal V_s$ it follows that
\begin{equation}\label{e10}
{\mathcal V_s}\overset{\sigma\rightarrow 0}{\longrightarrow} Az\quad \mbox{ and } \quad \mathcal V_s(z)=Az \ \ \ \ \mbox{for}\ \ \ \ \|z\|\ge 1,
\end{equation}
and consequently
\begin{equation}\label{e10-1}
\lim\limits_{\sigma\rightarrow 0}P^{-1}(z)=e^{-A}z\quad \mbox{and} \quad \|P^{-1}\|\le e^{\|A\|}+1.
\end{equation}
Hereafter, for a matrix or a matrix function $W(z)$ we denote by $\|W(z)\|$ the maximum of the absolute values of its elements, and
\[
\|W\|:=\max\limits_{\|z\|\le 1}(\|W(z)\|/\|z\|).
\]
According to \eqref{e10} and \eqref{e10-1} together with the definition of $\widetilde{\mathcal W},\ \widetilde{\mathcal V}$ and $\mathcal V_s$, we can choose $\sigma>0$ suitably small such that
\begin{equation}\label{e11-0}
\max\limits_{0\le |m|\le k}\{\|\partial^m P^{-1}\|,\ m\in\mathbb Z_+^n\}\le N_0= 2e^{\|A\|}.
\end{equation}
For proving the convergence of all the partial derivatives up to $k$th order of the series $h$ in \eqref{e9-0}, we need to estimate $\|\partial^m P^{-j}\|$ for $j\ge 1$, and all $m\in\mathbb Z_+^n$ and $0\le |m|\le k$.

\noindent{\bf Claim 2.} There exists a constant $K_k\ge 1$ depending on $k$ such that
\begin{equation}\label{e11}
\max\limits_{0\le |m|\le k}\{\|\partial^m P^{-j}\|\}\le K_k^j,\quad  j\in\mathbb N.
\end{equation}
Indeed, for $j=1$ the inequality \eqref{e11-0} shows that $K_k=N_0$ satisfies the requirement.
For $j>1$ we adopt induction. Assume that \eqref{e11} holds for all positive integers no more than $j$, we consider $j+1$. For any $m\in\mathbb Z_+^n$ satisfying $0\le |m|\le k$,
\begin{equation}\label{e11-1}
\begin{split}
\partial^mP^{-j-1}(z)=&\partial^m\left(P^{-1}\circ P^{-j}(z)\right)\\
=&\left.\frac{\partial P^{-1}(\mu)}{\partial \mu}\right|_{\mu=P^{-j}(z)}\partial^mP^{-j}(z)
+P_m(z),
\end{split}
\end{equation}
where
\[
P_m(z)=\Lambda_m\left(\left.\frac{\partial^{\ell} P^{-1}(\mu)}{\partial \mu^{\ell}}\right|_{\mu=P^{-j}(z)},\partial^\rho P^{-j}(z) ,
2\le |\ell|\le|m|,\ 1\le |\rho|<|m| \right),
\]
with $\Lambda_m$ an $n$--dimensional vector valued polynomial in its variables with nonnegative coefficients. For the given $m$, let $p(m)$ be the maximum number of monomials in the $n$ components of $\Lambda_m$, and set $p_k=\max\limits_{0\le |m| \le k}p(m)$. Then for $m\in\mathbb Z_+^n$ and $0\le |m|\le k$ one has
\begin{align*}
\|P_m(z)\|&\le p(m)\left(\max\limits_{2\le |\ell|\le|m|} \left\|\left.\frac{\partial^{\ell} P^{-1}(\mu)}{\partial \mu^{\ell}}\right|_{\mu=P^{-j}(z)}\right\|\right)^k\left(\max\limits_{1\le |\rho|< |m|}\|\partial^\rho P^{-j}(z)\|\right)^k\nonumber\\
&\le p_k N_0^k \left(\max\limits_{1\le |\rho|< |m|}\|\partial^\rho P^{-j}(z)\|\right)^{2k}.
\end{align*}
This together with \eqref{e11-1} and the induction assumption gives that
\begin{align*}
\|\partial^mP^{-j-1}(z)\|&\le N_0 \left(\max\limits_{1\le |\rho|\le |m|}\|\partial^\rho P^{-j}(z)\|\right)^{2}+
p_k N_0^k \left(\max\limits_{1\le |\rho|\le |m|}\|\partial^\rho P^{-j}(z)\|\right)^{2k}\nonumber\\
&\le 2 p_k N_0^k \left(\max\limits_{1\le |\rho|\le |m|}\|\partial^\rho P^{-j}(z)\|\right)^{2k}\\
&\le 2p_kN_0^k\left(K_{|m|}^{2k}\right)^j\le K_k^{j+1},\nonumber
\end{align*}
where $K_k=\max\{2p_kN_0^k, K_{|m|}^{2k}\}$, and in the last inequality we have used the induction assumption on $j$. Note that $p_k$ depends only on the dimension of the system and the order $k$ of the derivative, and $N_0$ depends on $A$ and $K_{|m|}$ depends on $k$ and $A$. Consequently $K_k$ depends only on $k$. The claim follows.

Finally we prove that $h(z)$ in \eqref{e9-0} is $C^k$ for any $k\in\mathbb N$.

\noindent{\bf Claim 3.} For any $m\in\mathbb Z_+^n$ satisfying $|m|=k$, the series
\begin{equation}\label{e12}
 \sum\limits_{j=0}\limits^\infty\partial^m\left(\left(\partial_zP^{-j}(z)\right)^{-1}G\left(P^{-j}(z)\right)\right)
\end{equation}
is absolutely convergent. For proving this claim we need the next facts:
\begin{enumerate}[$(f_1)$]
\item  Derivatives of product of two functions
\begin{align*}
\partial^m&\left(\left(\partial_zP^{-j}(z)\right)^{-1}G\left(P^{-j}(z)\right)\right)\\
&=\sum\limits_{\begin{subarray}{c} \ell\in\mathbb Z_+^n\\ \ell \prec m\end{subarray}}c_\ell
\partial^\ell\left(\left(\partial_zP^{-j}(z)\right)^{-1}\right)
\partial^{m-\ell}G\left(P^{-j}(z)\right),
\end{align*}
where $c_\ell$'s are nonnegative integers depending on the expansion, and $\ell\prec m$ represents that any element of $\ell$ is no more than the corresponding one of $m$. Set $c_k=\max\{c_\ell;\ |\ell|\le k,\ \ell\in\mathbb Z_+^n\}$. Since $|m|\le k$, the terms in the summation are finite, denote by $s_k$ the maximum number.
\item Taking $\mu_M$ to be one half of the minimum of the positive real parts of eigenvalues of $B$, and $\mathcal P_U(z)$ the projection of $z\in\mathbb R^n$ to the unstable subspace $U$. Then
    \[
    \|\mathcal P_U\psi_{-t}(z)\|\le \kappa e^{-t\mu_M}\|\mathcal P_Uz\|=\kappa e^{-t\mu_M}\|z_u\|,
    \]
where $\kappa$ is a positive constant.
\item Restricting to the center--stable manifold
\[
\mbox{jet}_z^\infty G(z)=0,\quad \mbox{for arbitrary }\ z\in C\oplus S,
\]
with $G$ given in \eqref{e9-0}, where we have used the property that $R^-$ satisfies, i.e. $\mbox{jet}_z^\infty R^-(z)=0$ for arbitrary $z\in C\oplus S$.
\end{enumerate}

By $(f_3)$ it follows that for any $m\in\mathbb Z_+^n$, $|m|\le k$ and any $N\in\mathbb N$
\begin{equation}\label{eii1}
\|\partial^m G(z)\|\le a_m\|z_u\|^N,
\end{equation}
where $a_m$ is a positive constant depending only on $m$. Hence, by $(f_2)$ one has
\begin{equation}\label{e12-0}
\|\partial^{m-\ell}G\left(P^{-j}(z)\right)\|\le a_{m-\ell}\|\mathcal P_UP^{-j}(z)\|^N\le a_{m-\ell}\kappa e^{-jN\mu_M}\|z_u\|^N.
\end{equation}

According to $(f_1)$, now we need to estimate $\left\|\partial^\ell\left(\left(\partial_zP^{-j}(z)\right)^{-1}\right)\right\|$. In fact, we prove that there exists a positive constant $M_k$ depending only on $k$ such that for any $|\ell|\le k$
\begin{equation}\label{e12-1}
\left\|\partial^\ell\left(\partial_zP^{-j}(z)\right)^{-1}\right\|\le M_k^j.
\end{equation}
Its proof is similar to that of \eqref{e11}. We present a sketch one via induction on $|\ell|\ge 1$ and the identity
\[
 \partial^\ell\left(\left(\partial_zP^{-j}(z)\right)\left(\partial_zP^{-j}(z)\right)^{-1}\right)=\partial^\ell(I)=0.
\]
For $|\ell|=1$ one has
\[
\partial^\ell\left(\left(\partial_zP^{-j}(z)\right)^{-1}\right)=-\left(\partial_zP^{-j}(z)\right)^{-1}
\partial^\ell\left( \partial_zP^{-j}(z) \right)\left(\partial_zP^{-j}(z)\right)^{-1}.
\]
Note from \eqref{e10-1} that
\[
\lim\limits_{\sigma\rightarrow 0}\partial_zP^{-j}(z)=e^{-jA}.
\]
So there exists a $\sigma>0$ small enough such that
\[
\left\|\left(\partial_zP^{-j}(z)\right)^{-1}\right\|=2e^{j\|A\|}.
\]
Consequently, for $|\ell|=1$ one has
\begin{equation}\label{e12}
\left\|\partial^\ell\left(\left(\partial_zP^{-j}(z)\right)^{-1}\right)\right\|\le 4 e^{2j\|A\|}\left\|
\partial^\ell\left( \partial_zP^{-j}(z) \right)\right\| \le 4 e^{2j\|A\|}K_k^j.
\end{equation}
This proves \eqref{e12-1} for $|l|=1$.
For $\ell >1$, by the identity
\[
\partial^\ell\left(\partial_zP^{-j}(z)\right)^{-1}=-\left(\partial_zP^{-j}(z)\right)^{-1}
\sum\limits_{\begin{subarray}{c}\eta\prec \ell\\ \eta\in\mathbb Z_+^n,\eta\ne 0\end{subarray}}
d_\eta \partial^\eta\left( \partial_zP^{-j}(z) \right)\partial^{\ell-\eta}\left(\partial_zP^{-j}(z)\right)^{-1},
\]
with $d_\eta$ nonnegative numbers, one has
\begin{align}\label{e12*}
&\|\partial^\ell\left(\partial_zP^{-j}(z)\right)^{-1}\|\\
&\le  2\mathfrak d_k e^{j\|A\|}
\sum\limits_{\begin{subarray}{c}\eta\prec \ell\\ \eta\in\mathbb Z_+^n,\eta\ne 0\end{subarray}}
\| \partial^\eta\left( \partial_zP^{-j}(z) \right)\| \|\partial^{\ell-\eta}\left(\partial_zP^{-j}(z)\right)^{-1}\|,\nonumber
\end{align}
where $\mathfrak d_k$ is the maximum of all $d_\eta$'s with $\eta\prec \ell$ and $|\ell|\le k$. For given $k$ the number of elements in the summation of \eqref{e12*} is always bounded. Hence applying \eqref{e11} and \eqref{e12-1} inductively to \eqref{e12*} yields that there exists a positive constant $M_k$ such that the estimation \eqref{e12-1} holds.

Combining \eqref{e12-0} and \eqref{e12-1}, and taking $N$ in \eqref{eii1} suitably large, one gets from $(f_1)$ that
\[
\begin{split}
&\left\|\partial^m\left(\left(\partial_zP^{-j}(z)\right)^{-1}G\left(P^{-j}(z)\right)\right)\right\|\\
&\le\sum\limits_{\begin{subarray}{c} \ell\in\mathbb Z_+^n\\ \ell \prec m\end{subarray}}c_\ell
\left\|\partial^\ell \left(\partial_zP^{-j}(z)\right)^{-1} \right\|
\left\|\partial^{m-\ell}G\left(P^{-j}(z)\right)\right\|\\
&\le c_ks_kM_k^j a_{m-\ell}\kappa e^{-jN\mu_M}\|z_u\|^N\le \varrho^j \|z_u\|^N,
\end{split}
\]
where $s_k$ numerates the maximum number of elements in the summation for all $m$ with $|m|\le k$, and in the last inequality we have used the fact that $\lim\limits_{N\rightarrow \infty}e^{-N\mu_M}=0$.
Note that we can choose $N$ large enough such that $\varrho<1$.
This shows that for $m\in\mathbb Z_+^n$ and $|m|=k$, the $m$th order derivative of the series \eqref{e9-0} is absolutely convergent. The claim is proved.

Claim 3 verifies that $h(z)$ is a $C^k$ function for arbitrary $k\in\mathbb N$. Going back to $\mathcal W$ and $\mathcal V$ in the region $\|z\|\le 1/2$ via the linear change $\sigma z\rightarrow z$ as mentioned previously, we get a $C^k$ equivalence between $\mathcal W$ and $\mathcal V$ in a suitable neighborhood of the origin, whose size depends on the small positive  $\sigma$. Of course, with increase of $k\in\mathbb N$ the region of the origin where the vector fields $\mathcal W$ and $\mathcal V$ are $C^k$ equivalent may decrease.
By the arbitrariness of $k\in\mathbb N$ it follows that $h(z)$ is a $C^\infty$ function. Then we have a $C^\infty$ equivalence between $\widetilde {\mathcal W}$ and $\widetilde{\mathcal V}$. And consequently, the vector fields $\mathcal W$ and $\mathcal V$ are $C^\infty$ equivalent near the origin. As a consequence, system \eqref{e1} has a $C^\infty$ first integral in a neighborhood of the origin.

It completes the proof of the theorem.  \qed

\section{Proofs of Theorem \ref{t2} and Corollary \ref{c1}}\label{st2}

According to the proof of the sufficiency of Theorem \ref{t1}, system \eqref{e1} is analytically equivalent to system \eqref{e3}. So we only need to prove the theorem for system \eqref{e3}.
Let $\mathcal X_0$ be the linear vector field defined by the linear part of system \eqref{e3}, and let $\mathcal V$ be the set of all vector fields of form \eqref{e3} with their linear part being the same as $\mathcal X_0$.

\begin{lemma}\label{l11}
Let $\mathcal S$ be an affine finite--dimensional subspace of $\mathcal V$, and set
\[
\mathcal S:=\left\{\mathcal X_0+t_1\mathcal X_1+\ldots+t_m\mathcal X_m|\ t=(t_1,\ldots,t_m)\in\mathbb R^m\right\}.
\]
where $\mathcal X_1,\ \ldots, \ \mathcal X_m$ are linearly independent vector fields of the form
\[
(F_1(y)\mathbf y_2,\ F_2(y)\mathbf y_2)^\tau,
 \]
with $F_1$ and $F_2$ respectively $n-1$--dimensional analytic row vector valued function and analytic matrix valued function of order $n-1$ satisfying $F_1(0)=0$ and $F_2(0)=0$.
Then each vector field in $\mathcal S$ has a formal first integral in $(\mathbb R^n,0)$, whose homogeneous component of degree $k\in\mathbb N$ is a polynomial in $t$ of degree $k-1$ without constant term.
\end{lemma}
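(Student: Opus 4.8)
The plan is to construct the first integral of $\mathcal{X}_t:=\mathcal{X}_0+t_1\mathcal{X}_1+\dots+t_m\mathcal{X}_m$ as a formal series $H=\sum_{k\ge 1}H_k$ in the $y$-variables, built recursively in the homogeneous degree $k$, and then to read the claimed dependence on $t$ straight off the recursion. Decompose each derivation $\mathcal{X}_i$, $1\le i\le m$, as $\mathcal{X}_i=\sum_{j\ge 1}\mathcal{X}_i^{(j)}$, where $\mathcal{X}_i^{(j)}$ raises the polynomial degree by $j$; the sum starts at $j=1$ since the entries of $F_1$ and $F_2$ vanish at the origin for each $\mathcal{X}_i$. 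Writing $\mathcal{X}_0$ for the linear vector field $(0,B\mathbf y_2)^\tau$, the equation $\mathcal{X}_t(H)=0$ separates, degree by degree, into the homological equations
\[
\mathcal{X}_0(H_k)=-\sum_{i=1}^{m}t_i\sum_{j=1}^{k-1}\mathcal{X}_i^{(j)}\bigl(H_{k-j}\bigr),\qquad k\ge 1,
\]
whose right-hand side involves only $H_1,\dots,H_{k-1}$. I would set $H_1=y_1$ and then solve for $H_k$, $k\ge 2$, by induction on $k$.

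The first task is to show these equations are always solvable. On the space of homogeneous polynomials of degree $k$, the operator $\mathcal{X}_0$ is the sum of commuting semisimple and nilpotent parts, and by \eqref{e2} the only monomial of degree $k$ killed by the semisimple part is $y_1^{k}$: indeed $\langle\widetilde\alpha,\widetilde\lambda\rangle\neq 0$ whenever $\widetilde\alpha\neq 0$, since for $|\widetilde\alpha|\ge 2$ this is \eqref{e2} and for $\widetilde\alpha=e_j$ one has $2\lambda_j=\langle 2e_j,\widetilde\lambda\rangle\neq 0$, again by \eqref{e2}. Hence, by the Poincar\'e--Dulac splitting, $\mathcal{X}_0$ restricts to a linear isomorphism of the subspace $\mathcal{P}_k$ of homogeneous polynomials of degree $k$ that contain no $y_1^{k}$ monomial. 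It then remains only to check that the right-hand side of each homological equation lies in $\mathcal{P}_k$, and this is automatic: every $\mathcal{X}_i$ has all its components in the ideal $(y_2,\dots,y_n)$ --- the first component is $F_1(y)\mathbf y_2$ and the others are the entries of $F_2(y)\mathbf y_2$ --- so $\mathcal{X}_i(G)$ contains no pure power of $y_1$ for any formal series $G$. Thus the recursion never stalls; it yields a formal first integral of $\mathcal{X}_t$, and taking $H_k\in\mathcal{P}_k$ for $k\ge 2$ fixes the unique normalized choice $H=y_1+h$ with $h$ a series of nonresonant monomials of degree $\ge 2$ --- the shape appearing in Proposition \ref{p11}. (Existence alone is also immediate from Theorem B, since $\mathbf y_2=0$ is a line of singularities of every $\mathcal{X}_t$.)

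The dependence on $t$ now follows by induction on $k$ from the displayed recursion. At $t=0$ the right-hand side vanishes, so $\mathcal{X}_0(H_k)=0$ with $H_k\in\mathcal{P}_k$, whence $H_k|_{t=0}=0$ for $k\ge 2$; and $H_1=y_1$ is independent of $t$. Assuming that $H_\ell$ is a polynomial in $t$ of degree $\le\max\{\ell-1,0\}$ with vanishing constant term for every $\ell<k$, each summand $t_i\,\mathcal{X}_i^{(j)}(H_{k-j})$ carries the explicit factor $t_i$ together with an $H_{k-j}$ of $t$-degree $\le\max\{k-j-1,0\}$, hence is a polynomial in $t$ of degree $\le k-j\le k-1$ with no constant term; since $(\mathcal{X}_0|_{\mathcal{P}_k})^{-1}$ is a linear map not involving $t$, it preserves both properties, so $H_k$ is a polynomial in $t$ of degree $\le k-1$ without constant term. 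This is the assertion of the lemma (with $H_1=y_1$ in the case $k=1$).

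The only genuinely substantive point is the structural observation underlying solvability: because the perturbations $\mathcal{X}_i$ are divisible by $\mathbf y_2$, the sole potential obstruction to each homological equation --- the coefficient of $y_1^{k}$ on its right-hand side --- vanishes identically, which is precisely the mechanism that keeps the $y_1$-axis a line of singularities and that lies behind Theorem B in this setting. Everything else --- the degree splitting of $\mathcal{X}_t(H)$, the Poincar\'e--Dulac invertibility of $\mathcal{X}_0$ on $\mathcal{P}_k$, and the induction on the $t$-degree --- is routine; the one thing to watch is that the whole construction be kept uniform in $t$, so that the $H_k$ are honest polynomials in $t$ and $H$ is a genuine first integral for each fixed $t\in\mathbb R^m$, not merely formally in $t$.
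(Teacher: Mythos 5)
Your proposal is correct and follows essentially the same route as the paper: both set up the degree-by-degree homological recursion for $H=\sum_k H_k$ with $H_1=y_1$, both observe that solvability hinges on the right-hand side being divisible by $\mathbf y_2$ (so the only resonant monomial $y_1^k$ never appears, which the paper justifies via Bibikov's spectral lemma, Proposition \ref{p12}, and you via the equivalent Poincar\'e--Dulac splitting), and both obtain the $t$-degree bound by the same induction using the explicit factor $t_i$ in each perturbation term. No substantive difference.
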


In order to prove this lemma we need the next result, see e.g. Bibikov \cite[Lemma 1.1]{Bi79}.
\begin{proposition}\label{p12}
Let $C$ and $D$ be square matrices of order $n$ with respectively the eigenvalues $\alpha=(\alpha_1,\ldots,\alpha_n)$ and $\beta=(\beta_1,\ldots,\beta_n)$, and let $\mathcal H^\ell(x)$ be $n$--dimensional vector valued homogeneous polynomials of degree $\ell$. Then the linear operator
\[
\mathcal L(F)(x):=\frac{\partial F(x)}{\partial x} Cx-DF(x), \qquad F(x)\in \mathcal H^\ell(x),
\]
has the set of eigenvalues
\[
\left\{\langle m,\alpha\rangle-\beta_j|\ m\in\mathbb Z_+^n,\ \ |m|=\ell,\ \ j=1,\ldots,n  \right\}.
\]
\end{proposition}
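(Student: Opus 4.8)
The plan is to split the homological operator as $\mathcal L=\mathcal T_1-\mathcal T_2$, where
\[
\mathcal T_1(F)(x)=\frac{\partial F(x)}{\partial x}\,Cx,\qquad \mathcal T_2(F)(x)=DF(x),
\]
and then to reduce, by a linear change of basis on $\mathcal H^\ell(x)$, to the case in which $C$ and $D$ are upper triangular, where the spectrum can be read off the diagonal. Since $D$ is a constant matrix one has $\partial(DF)/\partial x=D\,\partial F/\partial x$, so $\mathcal T_1$ and $\mathcal T_2$ commute. First I would complexify (the eigenvalues are in general complex) and note that the spectrum of $\mathcal L$ is unchanged under conjugation by any invertible linear map of $\mathcal H^\ell(x)$. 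For invertible matrices $P,Q$ define $\Psi\colon\mathcal H^\ell(x)\to\mathcal H^\ell(x)$ by $\Psi(F)(x)=Q^{-1}F(Px)$; a chain-rule computation shows $\widetilde{\mathcal L}\,\Psi=\Psi\,\mathcal L$, where $\widetilde{\mathcal L}$ is the operator built from $(\widetilde C,\widetilde D)=(P^{-1}CP,\ Q^{-1}DQ)$. Choosing $P$ and $Q$ independently so that $\widetilde C$ and $\widetilde D$ are upper triangular with diagonals $\alpha$ and $\beta$ respectively, it suffices to compute the spectrum of $\widetilde{\mathcal L}$, which I again denote by $\mathcal L$.

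With $C,D$ upper triangular I would work in the monomial basis $\{x^m\mathbf e_p:\ |m|=\ell,\ 1\le p\le n\}$ of $\mathcal H^\ell(x)$, where $\mathbf e_p$ is the $p$-th standard basis vector of the target space. A direct computation gives
\[
\mathcal T_1(x^m\mathbf e_p)=\langle m,\alpha\rangle\,x^m\mathbf e_p+R_1,\qquad \mathcal T_2(x^m\mathbf e_p)=\beta_p\,x^m\mathbf e_p+\sum_{q<p}D_{qp}\,x^m\mathbf e_q,
\]
where $R_1$ is a combination of monomials $x^{m'}\mathbf e_p$ obtained from $x^m$ by replacing a factor $x_i$ with some $x_j$, $j>i$. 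Introducing the weight $w(m)=\sum_k k\,m_k$, every off-diagonal monomial of $\mathcal T_1$ satisfies $w(m')=w(m)+(j-i)>w(m)$ and keeps the component index $p$, while the off-diagonal terms of $\mathcal T_2$ keep $m$ (hence keep $w$) and strictly lower the component index. Ordering the basis primarily by increasing $w(m)$ and, within each weight level, by decreasing $p$, therefore makes both $\mathcal T_1$ and $\mathcal T_2$, and hence $\mathcal L$, triangular, with diagonal entry $\langle m,\alpha\rangle-\beta_p$ on $x^m\mathbf e_p$. Since the eigenvalues of a triangular operator on a finite-dimensional space are exactly its diagonal entries, the spectrum of $\mathcal L$ equals $\{\langle m,\alpha\rangle-\beta_j:\ m\in\mathbb Z_+^n,\ |m|=\ell,\ 1\le j\le n\}$.

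I expect the reduction to triangular form, together with checking that the chosen ordering really yields a single triangular structure, to be the main point rather than the final bookkeeping. The crucial observation is that $\mathcal T_1$ strictly raises $w$ in all its off-diagonal terms while $\mathcal T_2$ leaves $m$ unchanged, so two distinct multi-indices of equal weight are never mixed; consequently the order chosen among equal-weight monomials is immaterial and the diagonal is unambiguous. A clean alternative to the explicit ordering is to invoke that a commuting pair of operators over $\mathbb C$ admits a simultaneous triangularization; the monomial computation above then identifies the simultaneous diagonal pairs as $(\langle m,\alpha\rangle,\beta_p)$, which yields the same spectrum for $\mathcal L=\mathcal T_1-\mathcal T_2$.
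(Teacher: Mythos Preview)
Your argument is correct. The paper does not actually prove this proposition; it simply quotes it from Bibikov \cite[Lemma~1.1]{Bi79}, and your proof is essentially the classical one found there: conjugate so that $C$ and $D$ become (upper) triangular and then observe that the homological operator is triangular in the monomial basis with the stated diagonal entries. Your intertwining computation $\widetilde{\mathcal L}\,\Psi=\Psi\,\mathcal L$ with $\Psi(F)(x)=Q^{-1}F(Px)$ is right, as is the weight argument showing that the off-diagonal part of $\mathcal T_1$ strictly raises $w(m)=\sum_k k\,m_k$ while $\mathcal T_2$ fixes $m$ and lowers the component index; together these make $\mathcal L$ genuinely triangular in the ordering you describe, and your remark that equal-weight monomials with the same component index are never mixed disposes of the only possible ambiguity.
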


\noindent{\it Proof of Lemma \ref{l11}}. By the construction of $\mathcal S$ it follows clearly that $\mathcal S$ is isomorphic to $\mathbb R^m$. Denote by $\mathcal X_t$ the vector field in $\mathcal S$ for each $t\in\mathbb R^m$, and write its associated differential system as
\begin{equation}\label{e3*}
\begin{split}
\dot y_1&=F_1(t,y)\mathbf y_2,\\
\dot {\mathbf y}_2&=B\mathbf y_2+F_2(t,y)\mathbf y_2,
\end{split}
\end{equation}
with $F_1$ and $F_2$ depending on $t$ linear, and $F_1$ a row vector valued function and $F_2$ a square matrix valued function of order $n-1$.

Priori let $H(t,y)$ be an analytic or a formal first integral of system \eqref{e3*}. By the equivalent characterization of first integrals, it follows that $H(t,y)$ satisfies the next equality.
\begin{equation}\label{e3*1}
F_1(t,y)\mathbf y_2\frac{\partial H}{\partial y_1} +\left\langle B\mathbf y_2+F_2(t,y)\mathbf y_2,\, \frac{\partial H}{\partial \mathbf y_2}\right\rangle\equiv 0.
\end{equation}
Write $H(t,y)$ in series expansion in $y$
\begin{equation}\label{e3*2}
H(t,y)=\sum\limits_{s=\ell}\limits^\infty H_s(t,y),
\end{equation}
with $\ell\in\mathbb N$ and $H_s(t,y)$ a homogeneous polynomial in $y$ of degree $s$ whose coefficients depending on $t$.
Let the Taylor expansions of $F_1(t,y)$ and $F_2(t,y)$ be
\begin{equation}\label{e3*3}
F_1(t,y)=\sum\limits_{s=1}\limits^\infty F_{1s}(t,y), \qquad
F_2(t,y)=\sum\limits_{s=1}\limits^\infty F_{2s}(t,y),
\end{equation}
with each $F_{1s}$ a vector valued homogeneous polynomial of degree $s$ in $y$ and each $F_{2s}(t,y)$ a square matrix valued homogeneous polynomials in $y$ of degree $s$. Here all $F_{1s}$'s and $F_{2s}$'s depend on $t$ linearly.

Let $\mathcal L$ be the linear operator defined by
\[
\mathcal L=\left\langle B\mathbf y_2,\, \frac{\partial }{\partial \mathbf y_2}\right\rangle.
\]
Substituting \eqref{e3*2} and \eqref{e3*3} into \eqref{e3*1}, and equating the homogeneous terms of degree $\ell+s$ for $s=0,1,\ldots$ yields
\begin{align}\label{e3*4}
\mathcal L (H_\ell)=&0,\\
\mathcal L (H_{\ell+s})=&
-\sum\limits_{j=1}\limits^{s}\left\langle F_{2j}(t,y)\mathbf y_2,\frac{\partial H_{\ell+s-j}}{\partial \mathbf y_2}\right\rangle\nonumber\\
&\qquad\  -\sum\limits_{j=1}\limits^{s}  F_{1j}(t,y)\mathbf y_2 \frac{\partial H_{\ell+s-j}}{\partial y_1},\quad \ell=1,2,\ldots\label{e3*5}
\end{align}

Let $\mathcal H_{s}(y)$ be the linear space formed by homogeneous polynomials in $y$ of degree $s$. Since the $n-1$--tuple of  eigenvalues $\lambda_2,\ldots,\lambda_n$ of $B$ are non--resonant, it follows from Proposition \ref{p12} that equation \eqref{e3*4} has only a solution in $y_1$. According to Proposition \ref{p11} we can take $\ell=1$ and $H_1(y)=y_1$.

For $s=1$, equation \eqref{e3*5} is simply
\[
\mathcal L(H_2)=-F_{11}(t,y)\mathbf y_2.
\]
Since the right hand side of this last equation vanishes identically when restricted to $\mathbf y_2=0$, applying Bibikov \cite[Lemma 1.1]{Bi79} to this last equation, one gets a unique homogeneous solution of degree $2$ modulus $y_1^2$, which is linear in both $\mathbf y_2$ and $t=(t_1,\ldots,t_m)$.

For $s=2$, equation \eqref{e3*5} is reduced to
\begin{equation}\label{e3**1}
\mathcal L(H_3)=-\left\langle F_{21}(t,y)\mathbf y_2,\frac{\partial H_2}{\partial\mathbf y_2}\right\rangle
-F_{11}(t,y)\mathbf y_2\frac{\partial H_2}{\partial y_1}-F_{12}(t,y)\mathbf y_2\frac{\partial H_1}{\partial y_1}.
\end{equation}
Note that the right hand side of this last equation is a homogeneous polynomial in $y$ of degree $3$, and also a polynomial in $t$ of degree $2$ without constant term.
Since the right hand side of \eqref{e3**1} vanishes identically when $\mathbf y_2=0$, applying again Proposition \ref{p12}  to equation \eqref{e3**1} associated with $\mathbf y_2$, one gets a unique homogeneous solution in $y$ of degree $3$ modulus $y_1^3$, which is quadratic in $t$.

Inductively, one gets that equation \eqref{e3*5} for each $s=3,4,\ldots$ has a unique homogeneous solution in $y$ of degree $1+s$ modulus $y_1^{1+s}$, which is of degree $s$ in $t$.

This proves the existence of formal first integrals of each vector field $\mathcal X_t$ in $\mathcal S$, with the properties stated in the lemma. \qed

For proving the generic divergence of the first integrals of system \eqref{e3}, we will use the pluripolar set, see \cite{Kl, Ts} for more detail information.

By definition, a {\it pluripolar set} is a subset of $\mathbb C^m$, saying $\Lambda$, which satisfies that
for each $z\in \Lambda$, there exists a neighborhood $U_z$ of $z$ and a plurisubharmonic function $p$ defined on $U_z$ verifying that ${\Lambda}\cap U_z\subset p^{-1}(-\infty)$.

A {\it plurisubharmonic function} is the one: $p:\Omega\rightarrow [-\infty,\infty)$, with $\Omega\subset\mathbb C^m$ an open subset, which
\begin{itemize}
\item is upper semicontinuous, i.e. $\{z\in\Omega: p(z)<c\}$ open for each $c\in\mathbb R$,
\item and is not identically $-\infty$ on any connected component of $\Omega$,
\item and satisfies that for any $z\in \Omega$
\[
p(z)\leq \frac{1}{2\pi}\int_0^{2\pi}p(z+e^{it}r)dt,
\]
with $r\in\mathbb C^m$ such that $z+\sigma
r\in\Omega$, where $\sigma\in\mathbb C$ satisfies $|\sigma|\leq 1$.
\end{itemize}
Recall that a pluripolar set in $\mathbb
C^m$ has Lebesgue measure $0$, and that the countable
union of pluripolar subsets is also pluripolar.

Denote by $\mathcal P$ the set of plurisubharmonic functions in
$\mathbb C^m$ which have minimal growth, i.e. $u(z)-\log\|z\|$ is bounded above when $\|z\|\rightarrow \infty$.
For any subset ${\Lambda}\subset \mathbb C^m$, define
\[
V_{\Lambda}(z)=\sup\{p(z)| \ \, p\in{\mathcal P},\  p|_{\Lambda}\leq 0 \},\qquad z\in\mathbb C^m.
\]
Having the above preparation one can recall the Bernstein--Walsh Lemma, see e.g. \cite{Pe} or \cite[p.156]{Kl}.

\noindent {\bf Bernstein--Walsh Lemma.} {\it Assume that ${\Lambda}\subset
\mathbb C^m$ is not pluripolar, and that $P(z)$ is a polynomial of
degree $k$. Then
\[
|P(z)|\leq \|P\|_{\Lambda}\exp\left(k V_{\Lambda}(z)\right), \quad \mbox{for }\ \ z\in  \mathbb C^m
\]
where $\|P\|_{\Lambda}$ is the supremum of $P$ on $\Lambda$.
}

We now apply the Bernstein--Walsh lemma to prove the next result, which is the key point in the proof of Theorem \ref{t2}.

\begin{lemma}\label{l12}
For the subset $\mathcal S$ given in Lemma \ref{l11}, the following statements hold.
\begin{itemize}
\item[$(a)$] Either each vector field in $\mathcal S$  has an analytic first integral in $(\mathbb R^n,0)$,
\item[$(b)$] or only the vector fields in an exceptional pluripolar subset of $\mathcal S$ have analytic first integrals in $(\mathbb R^n,0)$.
\end{itemize}
\end{lemma}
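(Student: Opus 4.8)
The plan is to apply the Bernstein--Walsh lemma to the Taylor coefficients of the distinguished formal first integral furnished by Lemma \ref{l11}.

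First I would recast the statement as a convergence problem for a single family of power series. By Lemma \ref{l11}, every $\mathcal X_t\in\mathcal S$ possesses a distinguished formal first integral
\[
H(t,y)=y_1+\sum_{s\ge 2}H_s(t,y),\qquad H_s(t,y)=\sum_{|j|=s}h_{s,j}(t)\,y^j,
\]
in which each coefficient $h_{s,j}$ is a polynomial in $t$ of degree at most $s-1$ with real coefficients, so it extends to a polynomial on $\mathbb C^m$. By Proposition \ref{p11} and the uniqueness of the distinguished first integral of the form $y_1+(\text{nonresonant terms})$, the vector field $\mathcal X_t$ admits a local analytic first integral at the origin if and only if $y\mapsto H(t,y)$ converges on a neighbourhood of $y=0$; and, since the number of degree--$s$ monomials in $n$ variables grows polynomially in $s$, the Cauchy estimates show that this convergence is equivalent to
\[
\Theta(t):=\limsup_{s\to\infty}\Big(\max_{|j|=s}|h_{s,j}(t)|\Big)^{1/s}<\infty .
\]
Setting $\Lambda:=\{t\in\mathbb R^m:\Theta(t)<\infty\}$, it therefore suffices to prove that $\Lambda$ is either all of $\mathbb R^m$ or a pluripolar subset of $\mathbb C^m$.

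Next I would run the dichotomy argument. Assume $\Lambda$ is not pluripolar and write
\[
\Lambda=\bigcup_{R,N\in\mathbb N}\Lambda_{R,N},\qquad \Lambda_{R,N}:=\{t\in\mathbb C^m:\ \|t\|\le N\ \text{and}\ |h_{s,j}(t)|\le R^{s}\ \text{for all }s,j\},
\]
which exhibits $\Lambda$ as a countable union of compact sets. Since a countable union of pluripolar sets is pluripolar, some $E:=\Lambda_{R_0,N_0}$ is a compact, non-pluripolar subset of $\mathbb C^m$. For every $s,j$ the polynomial $h_{s,j}$ has degree at most $s$ and satisfies $\|h_{s,j}\|_E\le R_0^{s}$, so the Bernstein--Walsh lemma gives, for all $z\in\mathbb C^m$,
\[
|h_{s,j}(z)|\ \le\ R_0^{s}\exp\!\big(s\,V_E(z)\big)\ \le\ R_0^{s}\exp\!\big(s\,V^*_E(z)\big),
\]
where $V^*_E$ denotes the upper semicontinuous regularisation of $V_E$. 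Since $E$ is bounded and non-pluripolar, $V^*_E$ is a plurisubharmonic function on $\mathbb C^m$, hence finite at every point, and therefore $\Theta(z)\le R_0\,e^{V^*_E(z)}<\infty$ for all $z\in\mathbb C^m$. Consequently $\Lambda=\mathbb R^m$: every $\mathcal X_t\in\mathcal S$ has an analytic first integral, which is case $(a)$. If instead $\Lambda\ne\mathbb R^m$, the contrapositive of what has just been shown forces $\Lambda$ to be pluripolar, and then the set of vector fields in $\mathcal S$ admitting a local analytic first integral is exactly this pluripolar set, which is case $(b)$.

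The crux lies in the third paragraph: one must verify that the Bernstein--Walsh inequality, which a priori only controls each $h_{s,j}$ on the compact set $E$, in fact controls $\Theta(z)$ at every point of $\mathbb C^m$. This hinges on the standard but essential pluripotential-theoretic fact that the extremal function $V^*_E$ of a bounded non-pluripolar set is a genuine (finite-valued) plurisubharmonic function, so that the bound $R_0^{s}\exp(sV^*_E(z))$ is truly geometric in $s$ for each fixed $z$; that is precisely what forces $\Lambda$ to pass from ``non-pluripolar'' all the way to ``everything''. A secondary point requiring care is the reduction of the second paragraph --- that having some analytic first integral is equivalent to convergence of the particular distinguished series $H(t,y)$ --- which rests on Proposition \ref{p11} and the uniqueness of the distinguished first integral. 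The remaining ingredients, the Cauchy-estimate criterion for convergence and the countable slicing of $\Lambda$ into compact pieces, are routine.
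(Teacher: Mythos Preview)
Your argument is correct and follows essentially the same route as the paper: obtain from Lemma~\ref{l11} a formal first integral whose Taylor coefficients are polynomials in $t$ of controlled degree, slice the ``convergent'' parameter set into a countable union of pieces on which the coefficients are uniformly bounded, pick a non-pluripolar piece, and apply the Bernstein--Walsh lemma to propagate the geometric bounds to all of $\mathbb C^m$. Your presentation is in fact slightly cleaner in two respects---you make explicit the reduction (via Proposition~\ref{p11} and uniqueness) that analyticity of \emph{some} first integral is equivalent to convergence of the distinguished one, and you invoke $V_E^*$ directly rather than bounding $V_{D_0}$ on an exhausting family of compacts---but these are cosmetic; the only slip is the notational one that $\Lambda\subset\mathbb R^m$ while your $\Lambda_{R,N}$ live in $\mathbb C^m$, so the displayed equality should be an inclusion.
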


\begin{proof} If statement $(b)$ holds, the proof is done. We now suppose that $\mathcal S$ has a subset, saying $\mathcal T$, which is not pluripolar, and whose each element has an analytic first integral.

Since we are in the conditions of Lemma \ref{l11}, we will use the notations and conclusions provided in Lemma \ref{l11} and its proof. Recall from Lemma \ref{l11} that any vector field in $\mathcal S$ has a formal first integral, whose homogeneous part of degree $k$ is a polynomial in $t$ of degree $k-1$.

Let $\mathcal B$ be the subset of $\mathbb C^m$ such that each $t\in\mathcal B$ corresponds to the vector field $\mathcal X_t$ in $\mathcal T$. Set ${\mathcal B}=\cup_{r\geq 1}{\mathcal B}_r$, with ${\mathcal B}_r$ the subset of $\mathcal B$ such that for each $t\in\mathcal B_r$ its associated vector field ${\mathcal X}_t$ has a convergent first integral $H_t$ in the ball $D_r$ of radius $1/r$, which is bounded by $1$ in $D_r$. Since $\mathcal B$ is not pluripolar and a countable union of pluripolar sets is again pluripolar, there exists at least a ${\mathcal B}_{r_0}$ which is non--pluripolar.

By the construction of $\mathcal B_{r_0}$ and the proof of Lemma \ref{l11}, it follows that for each $t\in\mathcal B_{r_0}$ the vector field $\mathcal X_t$, and its associated system \eqref{e3*}, has an analytic first integral $H_t(y)$ in $D_{r_0}$, which is bounded by $1$. Write $H_t(y)$ in series in $y$ with coefficients in $t$ as
\[
H_t(y)=\sum\limits_{j\in{\mathbb Z}_+^{n}} H_j(t)y^j,
\]
where $y^j$ is the multi--index as used previously, and $H_j(t)$'s are polynomials of degree at most $|j|-1$ as obtained in the proof of Lemma \ref{l11}.
For each $t\in\mathcal B_{r_0}$, since $H_t(y)$ is analytic in ${\mathcal D}_{r_0}$ and bounded by $1$, one gets from the Cauchy inequality that there exists a $\rho_0>0$ for
which
\begin{equation}\label{e3*7}
R(t):=\sup_{j\in\mathbb Z_+^n} |H_j(t) | \rho_0^{|j|}<\infty,\qquad
t\in {\mathcal B}_{r_0}.
\end{equation}
In fact, $\rho_0$ can be taken to be $1/(r_0+1)$.

According to the values of $R(t)$ in $\mathcal B_{r_0}$, we further decompose  the non--pluripolar set ${\mathcal B}_{r_0}$ in the union of $\mathcal C_s:= \{t\in{\mathcal B}_{r_0}| \,\,R(t)\leq s\},\ \  s\in \mathbb N$. Then there exists at least one non-pluripolar set among $\mathcal C_s$'s, saying $\mathcal C_{s_0}$.
Taking any fixed non--pluripolar compact subset of $\mathcal C_{s_0}$, denoted by $D_0$. Then $R(t)$ is bounded on $D_0$. Moreover, it follows from \eqref{e3*7} that there exists a $\rho_1>0$ such that
\begin{equation}\label{e3*71}
 |H_j(t) | \leq \rho_1^{|j|},\quad \mbox{ for all } \  t\in D_0,\ \ j\in\mathbb Z_+^n.
\end{equation}
Since $H_j(t)$ is a polynomial of degree $|j|-1$, we get from the Bernstein--Walsh lemma together with \eqref{e3*71} that for $t\in\mathbb C^m$
\begin{equation}\label{e3*72}
|H_j(t)|\le\|H_j\|_{D_0}\exp((|j|-1)V_{D_0}(t))\le \rho_1^{|j|} \exp((|j|-1)V_{D_0}(t)),
\end{equation}
where $\|H_j\|_{D_0}$ represents the maximum value of $|H_j(t)|$ on $D_0$.

Choose any fixed compact subset $\mathcal E\subset{\mathbb C}^m$. Then there
exists a $\rho_2>0$ such that
\begin{equation}\label{e3*73}
\exp((|j|-1)V_{D_0}(t))\le \exp((|j|-1)\|V_{D_0}\|_{\mathcal E})\le \rho_2^{|j|},
\end{equation}
where $\|V_{D_0}\|_{\mathcal E}$ represents the maximum value of $|V_{D_0}(t)|$ on $\mathcal E$. Note that for given $D_0$,  $\rho_2$ depends only on $\mathcal E$.
Combining the inequalities \eqref{e3*72} and \eqref{e3*73} one obtains that
\[
\|H_j\|_{\mathcal E}\le \rho_1^{|j|}\rho_2^{|j|}.
\]
Choose any $\rho\in(0, 1/(\rho_1\rho_2))$. Then for any $t\in\mathcal E$ the formal first integral $H_t(y)$ of the vector field $\mathcal X_t$ is convergent in the polydisc
$\left\{ y=(y_1,\ldots,y_n)|\right.$ $ \left. \,|y_j| \le {\rho},i=1,\ldots,n \right\}$.

By arbitrariness of the choice of the compact subset $\mathcal E$ in $\mathbb C^m$, it follows that for any $t\in\mathbb C^m$ the vector field $\mathcal X_t$ has an analytic first integral in a neighborhood of the origin.

It completes the proof of the lemma. \end{proof}

\noindent{\it Proof of Theorem \ref{t2}}. $(a)$ By contrary we assume that there exists a nonpluripolar subset of $\mathcal K$, whose any element has an analytic first integral in a neighborhood of the origin. Then by Lemma \ref{l12} any system in $\mathcal K$ has an analytic first integral in a neighborhood of the origin, a contradiction with the assumption that $\mathcal K$ contains an element, which has only formal first integrals in a neighborhood of the origin.

\noindent $(b)$ follows from the proof of statement $(a)$. This proves the theorem. \qed

\medskip

\noindent{\it Proof of Corollary \ref{c1}}. As mentioned in the remark after Corollary \ref{c1}, $\mathfrak P$ is a finite dimensional space. Let $m$ be the dimension of this space. By the proofs of Theorem \ref{t1} and Lemma \ref{l11} it follows that any system in $\mathfrak P$ has a formal first integral in a neighborhood of the origin, whose each monomial has its coefficient being a polynomial in $t$. We can write the nonlinear parts of the vector fields associated to the systems in $\mathfrak P$ as $t_1\mathcal N_1+\ldots+t_m\mathcal N_m$, where $t=(t_1,\ldots,t_m)\in\mathbb R^m$, and $\mathcal N_j$ for $j\in\{1,\ldots,m\}$ is an $n$--dimensional vector field with one component a nonlinear monomial and others all vanishing. Then the next proof follows from the same arguments as those given in the proof of Lemma \ref{l11}. The details are omitted. \qed

\section*{Acknowledgements}


The author is partially supported by NNSF of China grant numbers 11671254, 11871334 and 12071284.

\end{document}